\documentclass[twoside, 12pt, reqno]{amsart}

\usepackage[left=2.5cm, right=2.5cm, top=3cm, bottom=2.5cm]{geometry}

\usepackage[colorlinks=true, pdfstartview=FitV, linkcolor=blue,citecolor=blue, urlcolor=blue]{hyperref}

\usepackage[usenames]{xcolor}
\definecolor{labelkey}{rgb}{0,0,1}
\definecolor{Red}{rgb}{0.7,0,0.1}
\definecolor{Green}{rgb}{0,0.7,0}

\usepackage{amsfonts, amssymb, amsmath, amsthm, mathrsfs, bbm, cjhebrew, gensymb, textcomp, mathtools, dsfont,tikz}

\usepackage[normalem]{ulem}

\usepackage{commath, setspace, subcaption, parcolumns, multirow, multicol, accents, comment, marginnote, verbatim, empheq, enumerate, stackrel, enumitem}

\usepackage{cleveref}
\usepackage{graphicx}
\usepackage{epsfig}
\usepackage{psfrag}
\usepackage{float}
\usepackage[active]{srcltx}
\usepackage[pagewise, mathlines]{lineno}

\newtheorem{remark}{Remark}
\newtheorem{lemma}{Lemma}
\numberwithin{lemma}{section}

\newtheorem{theorem}{Theorem}
\numberwithin{theorem}{section}
\newtheorem{definition}{Definition}
\newtheorem{proposition}{Proposition}
\numberwithin{proposition}{section}

\numberwithin{equation}{section}

\newcommand{\al}{\alpha}
\newcommand{\be}{\beta}

\newcommand{\eps}{\epsilon}

\newcommand{\Lam}{\Lambda}
\newcommand{\si}{\sigma}

\newcommand{\tht}{\theta}

\newcommand{\Om}{\Omega}

\newcommand{\lpj}{\triangle_j}
\newcommand{\lpk}{\triangle_k}

\newcommand{\ZZ}{\mathbb{Z}}
\newcommand{\RR}{\mathbb{R}}

\newcommand{\lb}{\big\langle}
\newcommand{\rb}{\big\rangle}
\newcommand{\lbn}{\langle}
\newcommand{\rbn}{\rangle}

\newcommand{\Sob}[2]{\lVert#1\rVert_{#2}}
\newcommand{\nrm}[1]{\lVert#1\rVert}

\newcommand{\bdy}{\partial}

\newcommand{\til}[1]{\widetilde{#1}}

\newcommand{\Hdot}{\dot{H}}

\newcommand{\Acal}{\mathcal{A}}
\newcommand{\Bcal}{\mathcal{B}}

\DeclareMathOperator{\supp}{supp}

\makeatother

\newcommand{\la}{\Lambda}

\newcommand{\TT}{\mathbb{T}}

\newcommand{\gam}{\gamma}

\title[Existence of weak solutions to the generalized SQG equations in Sobolev spaces]{Existence of weak solutions to the generalized SQG equations in Sobolev spaces}
\author{Anuj Kumar$^{1,\dagger}$}
\address{$^1$Department of Mathematics,
Indian Institute of Technology Jodhpur, India}
\address{$\dagger$ corresponding author}
\email[A Kumar]{akumar241@outlook.com}
\date{\today}
\thanks{}
\begin{document}
\begin{abstract}
	We consider the two-dimensional dissipative generalized surface quasi-geostrophic equations given by
    \[\bdy_t \tht+u\cdot \nabla \tht+\nu (-\Delta)^{\al}\tht=0,\quad u=\nabla^{\perp}\Lam^{\be-2}\tht\] and establish global existence of weak solutions with initial data in $\dot{H}^{\frac{\be}{2}-1}(\mathbb{R}^2)$. Our result extends the framework of Marchand's solutions (Comm. Math. Phys. 277 (2008), no. 1, 45–67) to this larger family of active scalar equations.
\end{abstract}
\maketitle
{\noindent \small {\it {\bf Keywords: Generalized surface quasi-geostrophic (gSQG) equation, Weak solution, Sobolev spaces}
} \\
 {\it {\bf MSC 2020 Classifications:} 76D03, 35Q35, 35Q86, 35B65
  } }

\section{Introduction}
In this paper, we study the following generalized surface quasi-geostrophic (gSQG) equations:
\begin{align}\label{gSQG}
    \begin{cases}
        &\bdy_t \tht+u\cdot \nabla \tht+\nu (-\Delta)^{\al}\tht=0,\\
       &u=\mathcal{R}^{\perp}\Lam^{\be-1}\tht=\left(\bdy_{x_2}\Lam^{\beta-2}\tht,-\bdy_{x_1}\Lam^{\beta-2}\tht\right),\\
        &\tht(x,0)=\tht_0(x),\quad x\in \mathbb{R}^2,\,t>0.
    \end{cases}
\end{align}
In the above equation, $\tht(x,t)$ denotes the evolving scalar, $u(x,t)$ denotes the advecting velocity field, $\al \in (0,1)$ denotes the dissipation parameter, and $\beta\in (0,2)$ denotes the constitutive law parameter.  $\Lam$ denotes the fractional Laplacian operator $(-\Delta)^{1/2}$, defined via Fourier multiplier
$\widehat{\Lam f}(\xi)=|\xi|\widehat{f}(\xi)$
where $\widehat{f}=\mathcal{F}(f)$ is the Fourier transform of $f$ as defined in \eqref{def:FT}.\par
The study of the Cauchy problem for the family of equations in \eqref{gSQG} was initiated in \cite{ChaeConstantinWu2011} and \cite{ChaeConstantinCordobaGancedoWu2012}, while its inviscid counterpart was first studied in \cite{ChaeConstantinWu2012}. For $\be\in[0,1]$, \eqref{gSQG} interpolates between the 2D Euler equation in vorticity form $(\be=0)$ and the dissipative quasi-geostrophic equation (SQG) $(\be=1)$. For $\be \in (1,2)$, it represents a family of active scalar equations with increasingly singular constitutive laws. The SQG equation is a fundamental equation in geophysics, derived from general quasi-geostrophic models of atmospheric and ocean fluid flows. It represents the evolution of temperature or buoyancy of a strongly stratified fluid in a rapidly rotating regime. The mathematical study of the inviscid case was initiated in \cite{ConstantinMajdaTabak1994}, where it was shown to be an analog to the 3D Euler equations, in particular exhibiting a similar vortex-stretching mechanism. The SQG also exhibits turbulent features analogous to the 2D Euler equation as observed in \cite{MajdaTabak1996}. Depending on the balance between the dissipative and the nonlinear term, one distinguishes between three regimes of the SQG equation: subcritical ($\al>1/2$), critical ($\al=1/2$), and supercritical ($\al<1/2$). Smooth solutions are known to exist globally in the subcritical \cite{ConstantinWu1999, Resnick1995} and critical regimes, wherein the problem of global regularity remained open until it was settled independently by several sophisticated methods \cite{KiselevNazarovVolberg2007, CaffarelliVasseur2010, ConstantinVicol2012}. 
In the supercritical regime, the global regularity of smooth solutions remains an outstanding open problem. Nevertheless, several results on local well-posedness for large data, global well-posedness for small data, and Gevrey-class smoothing of solutions in time are available \cite{Miura2006, ChenMiaoZhang2007, HmidiKeraani2007, Wu2004, Biswas2014, Dong2010}. Similar studies for the gSQG equation \eqref{gSQG} have also been conducted; see, for instance, \cite{ChaeConstantinWu2011, ChaeConstantinWu2012, ChaeConstantinCordobaGancedoWu2012, HuKukavicaZiane2015, JollyKumarMartinez2020a} and the references therein.
\par
In this paper, we study the existence of global weak solutions to \eqref{gSQG} with initial data in $\Hdot^{\frac{\be}{2}-1}(\RR^2)$. The existence of weak solutions for the SQG equation was first studied by Resnick in \cite{Resnick1995}, where global $L^2$-weak solutions were established for both the inviscid and dissipative cases. Later on, Marchand \cite{Marchand2008} extended Resnick's result to a more general class and proved the global existence of weak solutions in $L^p(\RR^2)$ for $p>4/3$ and in $\Hdot^{-1/2}(\RR^2)$. The uniqueness of weak $L^2$-solutions is an unsolved problem. In this direction, Buckmaster, Shkoller and Vicol \cite{BuckmasterShkollerVicol2019} have shown nonuniqueness for a class of solutions with negative Sobolev regularity. For gSQG equations \eqref{gSQG}, the global existence of $L^2$-weak solutions have been established for $\be\in (0,1]$ and also for $\be \in (1,2)$ (see \cite{ChaeConstantinCordobaGancedoWu2012, MiaoXue2011JDE,LazarXue2019,FerreiraNichePlanas2017}. In this paper, we prove the existence of global weak solutions in the larger class of functions $\Hdot^{\frac{\be}{2}-1}(\RR^2)$ for the entire range of gSQG equations, i.e., $\be\in(0,2)$. In particular, we recover Marchand's result for the SQG equation ($\be=1$). In a recent work by Mengual and Solera \cite{MengualSolera2026}, nonuniqueness of weak solutions in $\Hdot^{\frac{\be}{2}-1}(\RR^2)$ has also been shown for certain classes of gSQG equations. It would also be interesting to extend Marchand's framework of $L^p$-weak solutions to \eqref{gSQG}, which we suppose requires commutator estimates in $L^p$-spaces for non-zero order singular operators that appear in the constitutive laws of \eqref{gSQG}.  \par
To establish our main result, we make a crucial observation about the structure of the nonlinear term in the spirit of Marchand's remark in \cite{Marchand2008}. This observation allows us to reformulate the nonlinear term in terms of commutators. As a result, the nonlinear term can be interpreted as a well-defined tempered distribution. The Fourier-based formulations of Sobolev spaces and Plancherel's theorem also play a crucial role in our proof. Due to the apparent lack of standalone commutator estimates for non-zero order singular operators, we prove bounds for commutators as they appear in the arguments, i.e., in terms of frequency-localized trilinear terms. Using their Fourier-based representations, we prove estimates for the commutators in terms of Sobolev norms by performing frequency decompositions with Littlewood-Paley dyadic operators. 
\par
Our main result establishing global weak solutions to \eqref{gSQG} is stated in \cref{T:main}. We provide a brief outline of the proof: First, we regularize \eqref{gSQG} by adding an artificial viscosity term, which ensures global smooth solutions for the approximation. The main difficulty in showing that the limiting solution satisfies \eqref{gSQG} in the weak sense lies in proving convergence of the nonlinear terms of the approximating equation in the sense of distributions. By using the crucial observation on the nonlinear structure, we express certain nonlinear terms in terms of commutators, thus affording better control over them and giving the required convergence upon passage to the limiting solution. 
\begin{theorem}\label{T:main}
    Let $T\in (0,\infty]$, $\al \in (0,1]$, and $\be \in (0,2)$. If $\tht_0 \in \Hdot^{\frac{\be}{2}-1}(\RR^2)$, then there exists a weak solution $\tht(x,t)$ on $(0,T)\times \RR^2$ to \eqref{gSQG} such that
    \[\tht \in L^\infty ((0,T);\Hdot^{\frac{\be}{2}-1}(\RR^2))\cap L^2 ((0,T);\Hdot^{\al+\frac{\be}{2}-1}(\RR^2)).\]
    Furthermore, $\tht$ satisfies the energy inequality 
    \begin{align*}
        \Sob{\tht(t)}{\Hdot^{\frac{\be}{2}-1}(\RR^2)}^2+2\nu \int_0^t \int_{\RR^2}|\Lam^{\al+\frac{\be}{2}-1}\tht(s)|^2\,dx\,ds\le \Sob{\tht_0}{\Hdot^{\frac{\be}{2}-1}(\RR^2)}^2
    \end{align*}
    for all $t\in (0,T)$.
\end{theorem}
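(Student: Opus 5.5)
We argue by compactness from a regularized problem. For $\eps>0$ we mollify the data, $\tht_0^\eps=\phi_\eps*\tht_0$ (so that $\tht_0^\eps\in H^s$ for all $s$ and $\Sob{\tht_0^\eps}{\Hdot^{\frac{\be}{2}-1}}\le\Sob{\tht_0}{\Hdot^{\frac{\be}{2}-1}}$), and add an artificial viscosity $-\eps\Delta\tht$. This gives the system
\[\bdy_t\tht^\eps+u^\eps\cdot\nabla\tht^\eps+\nu\Lam^{2\al}\tht^\eps-\eps\Delta\tht^\eps=0,\qquad u^\eps=\nabla^\perp\Lam^{\be-2}\tht^\eps.\]
For $\be<2$ the full Laplacian dominates the nonlinearity, so standard energy methods give a global smooth solution.

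The a priori bound comes from testing against $\Lam^{\be-2}\tht^\eps$. The nonlinear term vanishes, because
\[\int u^\eps\cdot\nabla\tht^\eps\,\Lam^{\be-2}\tht^\eps\,dx=-\int \tht^\eps\,u^\eps\cdot\nabla\Lam^{\be-2}\tht^\eps\,dx=0,\]
since $u^\eps=\nabla^\perp\Lam^{\be-2}\tht^\eps$ is orthogonal to $\nabla\Lam^{\be-2}\tht^\eps$. We therefore obtain the energy equality
\[\Sob{\tht^\eps(t)}{\Hdot^{\frac{\be}{2}-1}}^2+2\nu\int_0^t\Sob{\tht^\eps}{\Hdot^{\al+\frac{\be}{2}-1}}^2+2\eps\int_0^t\Sob{\tht^\eps}{\Hdot^{\frac{\be}{2}}}^2=\Sob{\tht_0^\eps}{\Hdot^{\frac{\be}{2}-1}}^2 .\]
This yields uniform bounds in $L^\infty\Hdot^{\frac{\be}{2}-1}\cap L^2\Hdot^{\al+\frac{\be}{2}-1}$, and a weak-$*$ limit $\tht$ along a subsequence.

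To pass to the limit in the nonlinearity we must first give it a meaning at this low regularity. Following Marchand's observation, we use $\nabla\cdot u=0$ to write $u\cdot\nabla\tht=\nabla\cdot(u\tht)$. Writing $\tht=\Lam^{2-\be}\psi$ with $\psi=\Lam^{\be-2}\tht$, so that $u=\nabla^\perp\psi$, the antisymmetry above gives, for each $j=1,2$,
\[\langle u\,\Lam^{2-\be}\psi,\bdy_j\varphi\rangle=\tfrac12\langle [\Lam^{2-\be},\bdy_j\varphi]\,\nabla^\perp\psi,\psi\rangle+(\text{lower order}),\]
where $\varphi$ is the test function. The identity is proved first for smooth $\tht$. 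Both factors carry $\psi\in\Hdot^{\frac{\be}{2}}$ at the energy level, and the commutator removes one derivative from the order-$(3-\be)$ operator. We will bound this bilinear form in Fourier variables via Littlewood–Paley decomposition, splitting the trilinear sum $\sum\lpj\lpk$ into low–high, high–low and high–high interactions. The goal is a bound by $C_\varphi\Sob{\tht}{\Hdot^{\frac{\be}{2}-1}}\Sob{\tht}{\Hdot^{\frac{\be}{2}-1+\delta}}$ for some $\delta\in(0,\al]$. This shows that the nonlinear term is a well-defined tempered distribution and controls the approximate nonlinear terms uniformly.

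\textbf{Main obstacle.} The key step is strong compactness, together with the commutator bound just described. From the equation, $\bdy_t\tht^\eps$ is bounded in $L^1_tH^{-N}_{loc}$. The Aubin–Lions lemma then gives strong convergence in $L^2_t H^{s}_{loc}$ for every $s<\al+\frac{\be}{2}-1$. Since the commutator form involves only slightly more than $\Hdot^{\frac{\be}{2}-1}$ regularity of each factor and the test function is compactly supported, we split off a high-frequency or tail remainder, made small uniformly in $\eps$ by the $L^2\Hdot^{\al+\frac{\be}{2}-1}$ bound, and treat the rest by local strong convergence. This shows the nonlinear terms converge in distributions. Carrying out the frequency-localized commutator estimate uniformly over $\be\in(0,2)$ is the hardest part: for $\be>1$ the symbol $|\xi|^{2-\be}$ has order below one, so the commutator gains are delicate near zero frequency and must be handled with the homogeneous norms.

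Finally, the linear terms pass to the limit directly, and the $\eps$-viscosity term vanishes because $\eps\int\!\!\int\tht^\eps\Delta\varphi\to0$. The energy inequality follows from weak lower semicontinuity of the norms, applied to the energy equality, for almost every $t$. Weak continuity in time into $\Hdot^{\frac{\be}{2}-1}$ then extends it to all $t\in(0,T)$.
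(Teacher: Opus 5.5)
Your architecture is the paper's: artificial viscosity, the energy identity obtained by testing with $\Lam^{\be-2}\tht^\eps$, a commutator form of the nonlinearity, Aubin--Lions, and splitting off a small remainder. Both the commutator and the compactness step are organized differently, though.

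\textbf{The commutator.} The paper writes the nonlinear term as $-\int\tht\,[\Lam^{\frac{\be}{2}-1}\nabla^\perp\cdot,\nabla\varphi](\Lam^{\frac{\be}{2}-1}\tht)\,dx$. It bounds this commutator from $L^2$ to $\Hdot^{1-\frac{\be}{2}}$ through Littlewood--Paley pieces and two imported commutator lemmas (Lemma~\ref{L:mainlemma}).

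Your identity $\int\tht\,u\cdot\nabla\varphi\,dx=\frac12\lb[\Lam^{2-\be},\nabla\varphi]\cdot\nabla^\perp\psi,\psi\rb$ is exact once the two components are summed. The antisymmetry of $f\mapsto\nabla\varphi\cdot\nabla^\perp f$ comes from $\nabla^\perp\cdot\nabla\varphi=0$, so there is no remainder to absorb. The estimate also closes more easily than you fear.

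Correct one slip first: $\psi=\Lam^{\be-2}\tht$ lies in $\Hdot^{1-\frac{\be}{2}}$, not $\Hdot^{\frac{\be}{2}}$. With this the order count is exact and no $\delta>0$ is needed.

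Now write $\omega=\xi-\eta$. Use $|\xi\cdot\eta^\perp|\le|\omega|\min(|\xi|,|\eta|)$, the mean value theorem when $|\omega|\le\frac12\max(|\xi|,|\eta|)$, and $\max(|\xi|,|\eta|)<2|\omega|$ otherwise. For every $\be\in(0,2)$ this gives
\[
\big|(|\xi|^{2-\be}-|\eta|^{2-\be})\,(\xi\cdot\eta^\perp)\,\hat\varphi(\omega)\big|\le C|\xi|^{1-\frac{\be}{2}}|\eta|^{1-\frac{\be}{2}}|\omega|^2|\hat\varphi(\omega)|.
\]
Young's inequality then yields $|\int\tht\,u\cdot\nabla\varphi\,dx|\le C\Sob{\mathcal{F}(\Delta\varphi)}{L^1}\Sob{\tht}{\Hdot^{\frac{\be}{2}-1}}^2$, and the same kernel bound gives the polarized bilinear estimate. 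No dyadic decomposition is needed, and there is no special difficulty at low frequency when $\be>1$. The paper's lemma buys a more general mapping property ($\Hdot^{\nu}\to\Hdot^{\si+\nu}$); for this theorem your symbol bound suffices.

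\textbf{The limit passage.} The step that needs a genuine extra idea is ``treat the rest by local strong convergence''. The velocity $u=\nabla^\perp\Lam^{\be-2}\tht$, like any $S_j$, is nonlocal. Convergence of $\tht^{\eps_k}$ in $L^2_tH^s_{loc}$ therefore does not by itself control $u^{\eps_k}$ on $\supp\varphi$. This is why the paper introduces the cutoffs $\rho,\gam$ and runs separate Aubin--Lions arguments for $\zeta_j^\eps$, $\bar\zeta_j^\eps$, $\varrho_j^\eps$, $H_j\psi^\eps$.

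In your setting the natural fix is a spatial near/far split. Take $\chi_L\in\mathcal{D}(\RR^2)$ with $\chi_L\equiv1$ on $\mathcal{B}(L)\supset\supp\varphi$. Using $\chi_L\nabla\varphi=\nabla\varphi$, write the nonlinear term as $Q_\varphi(\chi_L\tht)+\int\tht\,\nabla^\perp\Lam^{\be-2}((1-\chi_L)\tht)\cdot\nabla\varphi\,dx$, where $Q_\varphi$ is the quadratic form above.

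\begin{enumerate}
\item The near term converges because $\chi_L\tht^{\eps_k}\to\chi_L\tht$ in $L^2_tH^{\frac{\be}{2}-1}$, by your Aubin--Lions step since $\al>0$. For distributions supported in a fixed ball this norm is equivalent to $\Hdot^{\frac{\be}{2}-1}$, because $\frac{\be}{2}-1>-1$.
\item The far term is smooth on $\supp\varphi$, since the kernel has size $|x-y|^{-1-\be}$ with $|y|\ge L$. It is bounded by $C_\varphi L^{-\kappa}\Sob{\tht}{\Hdot^{\frac{\be}{2}-1}}^2$ for some $\kappa>0$, uniformly in $\eps$. This smallness comes from the $L^\infty_t\Hdot^{\frac{\be}{2}-1}$ bound and the kernel decay, not from the dissipation bound as you wrote.
\end{enumerate}

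Letting $k\to\infty$ and then $L\to\infty$ finishes the argument. With this split the dissipation is used only to upgrade to strong local convergence in $H^{\frac{\be}{2}-1}$. The paper's frequency truncation and auxiliary compactness arguments then disappear, so your route is somewhat shorter. If you instead split in frequency, the low-frequency pieces are out of reach of local strong convergence of $\tht^\eps$, and you are led back to the paper's route.
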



\section{Mathematical Preliminaries}
 In this section, we define the relevant notations and definitions used in the paper.
Let $\mathscr{S}(\RR^2)$ denote the space of Schwartz class functions on $\RR^2$ and $\mathscr{S}'(\RR^2)$ denote the space of tempered distributions. We denote by $\hat{f}$ or $\mathcal{F}(f)$, the Fourier transform of $f$, defined by
	\begin{align}\label{def:FT}
	    \hat{f}(\xi)\overset{}{:=}\int_{\RR^2} e^{-2\pi i x\cdot \xi}f(x)\,dx,\quad f\in\mathscr{S}'(\RR^2).
	\end{align}
   We have for any $f,g \in L^2(\RR^2)$
	    \begin{align}\notag
	        \lbn f,g \rbn_{L^2}=\lbn\hat{f},\hat{g}\rbn_{L^2},
	    \end{align}
        where $\lb ,\rb$ denotes the $L^2-$ inner product.
   $\Lam^\si$ denotes the fractional Laplacian operator, which is defined by its Fourier multiplier as
	    \begin{align}\notag
	        \mathcal{F}(\Lam^\si f)(\xi)=|\xi|^\si\mathcal{F}(f),\quad \si\in\RR.
	    \end{align} 
     Next, we define the $L^2$-based homogeneous and inhomogeneous Sobolev spaces. For $\si \in \mathbb{R}$, we have
	    \begin{align}
	        &\Hdot^\si(\mathbb{R}^2){:=}\left\{f\in \mathscr{S}(\RR^2):\hat{f}\in L^2_{loc},\, \Sob{f}{\Hdot^\si}:=\Sob{\Lam^\si f}{L^2}<\infty\right\},\label{def:hom:Sob:norm}\\
	        &H^\si(\mathbb{R}^2){:=}\left\{f\in \mathscr{S}(\RR^2):\hat{f}\in L^2_{loc}, \Sob{f}{H^\si}{=}\Sob{(I-\Delta)^{\si/2}) f}{L^2}<\infty\right\}.\label{def:inhom:Sob:norm}
	    \end{align}
Next, we provide a brief overview of the Littlewood-Paley decomposition of functions. 
We define
	\begin{align*}
	\mathscr{Q}(\RR^2){:=}\left\{f\in \mathscr{S}(\RR^2): \int_{\RR^2} f(x)x^{\tau}\, dx=0, \quad \abs{\tau}=0,1,2,\cdots \right\}.
	\end{align*}
	We denote by $\mathscr{Q}(\RR^2)'$, the topological dual of $\mathscr{Q}(\RR^2)$. Then, $\mathscr{Q}(\RR^2)'$ can be identified with the space of tempered distributions modulo the vector space of polynomials on $\mathbb{R}^2$, which we denote by $\mathscr{P}$, i.e. 
	\begin{align*}
	\mathscr{Q}'(\RR^2)\cong\mathscr{S}(\RR^2)/\mathscr{P}.
	\end{align*}
	Let us denote by ${\Bcal}(r)$ the open ball centered at the origin with radius $r$ and ${\Acal}(r_{1},r_{2})$, the open annulus centered at the origin with inner and outer radii $r_{1}$ and $r_{2}$. We can find two non-negative radial functions denoted $\chi,\varphi\in\mathscr{S}(\RR^2)$ satisfying
    \[\supp\chi\subset{\Bcal}(1)\] and \[\supp\varphi\subset{\Acal}(2^{-1},2)\] and such that the following conditions are satisfied
	\begin{align}
	    \begin{cases}
	    \sum_{j\in\ZZ}\varphi(2^{-j}\xi)=1,\\
	    \chi(\xi)+\sum_{j\geq0}\varphi(2^{-j}\xi)\equiv 1,\,\forall \xi\in\RR^2\setminus\{\mathbf{0}\},
	    \end{cases}\label{LP:conditions}
	\end{align}
    We define
    \[\varphi_j(\xi){:=}\varphi(2^{-j}\xi),\quad \chi_j(\xi){:=}\chi(2^{-j}\xi)\]
    and
    \begin{align}\notag            {\Acal}_{j}:= {\Acal}(2^{j-1},2^{j+1}),\quad{\Acal}_{\ell,k}:= {\Acal}(2^{\ell},2^{k}),\quad  {\Bcal}_j:={\Bcal}(2^j).
        \end{align}
    We observe that
	    \begin{align}\label{rewrite:supp}
	        \supp\varphi_j\subset{\Acal}_j,\quad \supp\chi_j\subset{\Bcal}_j.
	    \end{align}
    With the above assumptions, the following almost-orthogonality conditions hold
    \[\supp\varphi_i\cap\supp\varphi_j=\varnothing,\,\text{if}\,
	    |i-j|\geq2,\quad
	    \text{and}\quad\supp\varphi_i\cap\supp\chi =\varnothing, \,\text{if}\,i\ge 1.\]
	Let us denote by ${\lpj}$ and $S_{j}$, the (homogeneous) Littlewood-Paley dyadic blocks, defined by
	\begin{align}\notag
	\mathcal{F}({\lpj}f)=\varphi_{j}\mathcal{F}(f), \quad \mathcal{F}(S_{j}f)=\chi_{j}\mathcal{F}(f). 
	\end{align}
    We denote by 
    \[H_j f=(I-S_j)f.\]
    Note that, by \eqref{LP:conditions}, we have
	\begin{align*}
	&\mathcal{F}({\lpj}f)|_{{\Acal}_j^c}=0,\quad
	\mathcal{F}(S_{j}f)|_{{\Bcal}_j^{c}}=0,
	\end{align*}
    Furthermore, for any $f \in \mathscr{S}(\RR^2)$, we have
    \begin{align*}
         f&=S_if+\sum_{j\geq i}\lpj f,\quad i\in\ZZ.
    \end{align*}
	and for any $f\in\mathscr{Q}(\RR^2)'$, we have
	    \begin{align*}
	        f&=\sum_{j\in\ZZ}\lpj f.
	    \end{align*}
It is straightforward to verify the following norm equivalence
	    \begin{align}\label{norm:equivalence}
	       C^{-1}\Sob{f}{L^2(\RR^2)}\leq \left(\sum_{j\in \mathbb{Z}}\nrm{\lpj f}_{L^2(\RR^2)}^{2}\right)^{\frac{1}{2}}\leq C\Sob{f}{L^2(\RR^2)},
	    \end{align}
	for some constant $C$. 
    We recall the classical Bernstein inequalities (see \cite{BahouriCheminDanchinBook2011}):
    \begin{lemma}\label{L:Bernstein}
		Let $\si\in\RR$ and $1\le p \le q\le \infty$. Then
		\begin{align*}
	C^{-1}2^{\si j}\nrm{{\lpj}f}_{L^q(\mathbb{R}^2)}\le \nrm{\Lam^{\si}{\lpj}f}_{L^q(\mathbb{R}^2)}\le C 2^{\si j+2j(\frac{1}{p}-\frac{1}{q})}\nrm{{\lpj}f}_{L^p(\mathbb{R}^2)},
		\end{align*}
	 where $C>0$ is a constant that depends on $p,q$ and $\si$.
	\end{lemma}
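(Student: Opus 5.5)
The plan is the standard argument: exploit the fact that $\mathcal{F}(\lpj f)$ is supported in the annulus ${\Acal}_j$, so that on this annulus the multiplier $|\xi|^\si$ can be replaced by a smooth compactly supported symbol. After rescaling, that symbol becomes a fixed Schwartz kernel, and Young's convolution inequality then gives both bounds. By scaling, every constant will depend only on $p,q,\si$ and the fixed cutoff, never on $j$.

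\textbf{Step 1 (reproducing kernel).} Fix a smooth radial $\til{\varphi}\in C_c^\infty(\RR^2\setminus\{0\})$ with $\til{\varphi}\equiv 1$ on ${\Acal}(2^{-1},2)$, for instance supported in ${\Acal}(4^{-1},4)$. Since $\supp \varphi_j\subset {\Acal}_j$, we have $\mathcal{F}(\lpj f)=\til{\varphi}(2^{-j}\cdot)\,\mathcal{F}(\lpj f)$. Therefore
\[
\mathcal{F}(\Lam^\si\lpj f)(\xi)=2^{\si j}\,\psi(2^{-j}\xi)\,\mathcal{F}(\lpj f)(\xi),\qquad \psi(\eta):=|\eta|^\si\til{\varphi}(\eta).
\]
Because $\til\varphi$ vanishes near the origin, $\psi\in C_c^\infty(\RR^2)$ for every real $\si$, including negative ones. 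Hence $K:=\mathcal{F}^{-1}\psi\in\mathscr{S}(\RR^2)$. With $K_j(x):=2^{2j}K(2^jx)$ this gives
\[
\Lam^\si\lpj f=2^{\si j}K_j*\lpj f .
\]

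\textbf{Step 2 (upper bound).} Choose $r\in[1,\infty]$ with $1+\frac1q=\frac1r+\frac1p$; this is possible precisely because $p\le q$. Young's inequality gives
\[
\nrm{\Lam^\si\lpj f}_{L^q}\le 2^{\si j}\nrm{K_j}_{L^r}\nrm{\lpj f}_{L^p}.
\]
A change of variables shows $\nrm{K_j}_{L^r}=2^{2j(1-\frac1r)}\nrm{K}_{L^r}=2^{2j(\frac1p-\frac1q)}\nrm{K}_{L^r}$. This is the right-hand inequality, with $C=\nrm{K}_{L^r}$, which is finite since $K$ is Schwartz.

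\textbf{Step 3 (lower bound).} The function $\Lam^\si\lpj f$ is still frequency-supported in ${\Acal}_j$, so Step 1 applies to it with the symbol $|\eta|^{-\si}\til{\varphi}(\eta)$. This gives $\lpj f=\Lam^{-\si}\Lam^{\si}\lpj f=2^{-\si j}\til K_j*\Lam^\si\lpj f$, with $\til K$ Schwartz. Young's inequality with $r=1$ (so $p=q$) yields
\[
\nrm{\lpj f}_{L^q}\le 2^{-\si j}\nrm{\til K}_{L^1}\nrm{\Lam^\si\lpj f}_{L^q},
\]
which is the left-hand inequality.

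There is no real obstacle here. The only point needing care is Step 1: the cutoff $\til\varphi$ must vanish near $0$, so that $|\eta|^{\pm\si}\til\varphi(\eta)$ is smooth and $K,\til K$ are genuinely Schwartz (in particular integrable) even for negative $\si$. One should also check that the identity $\mathcal{F}(\lpj f)=\til\varphi(2^{-j}\cdot)\mathcal{F}(\lpj f)$ holds for tempered $f$, which it does because the Fourier support of $\lpj f$ lies in the annulus.
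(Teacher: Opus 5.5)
Your proof is correct. The paper gives no proof of this lemma and simply recalls it from Bahouri--Chemin--Danchin, where it is obtained by essentially the same argument you give: an annular cutoff vanishing near the origin turns $|\xi|^{\pm\si}$ into a rescaled Schwartz kernel, and Young's inequality with the $2^{2j(\frac{1}{p}-\frac{1}{q})}$ scaling of that kernel gives both bounds for every real $\si$.
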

    We also recall the following classical product estimate in homogeneous Sobolev spaces \cite{BahouriCheminDanchinBook2011}:
\begin{lemma}\label{L:Sobolev:product}
    Suppose that $s,t<1$ and $s+t>0$. Let $f\in \Hdot^{s}(\RR^2)$ and $g\in \Hdot^{t}(\RR^2)$. Then
    \[\Sob{fg}{\Hdot^{s+t-1}(\RR^2)}\le C\Sob{f}{\Hdot^s(\RR^2)}\Sob{g}{\Hdot^t(\RR^2)}.\]
\end{lemma}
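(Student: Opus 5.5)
The plan is to prove \cref{L:Sobolev:product} with Bony's paraproduct decomposition, built from the homogeneous Littlewood--Paley blocks $\lpj$ and $S_j$ introduced above. We measure every function through its dyadic pieces, using \eqref{norm:equivalence} in the form
\[
\Sob{h}{\Hdot^\si}\approx \Big(\sum_{j\in\ZZ}2^{2\si j}\nrm{\lpj h}_{L^2}^2\Big)^{1/2}.
\]
We write $\nrm{\lpk f}_{L^2}=2^{-sk}c_k\Sob{f}{\Hdot^s}$ and $\nrm{\lpk g}_{L^2}=2^{-tk}d_k\Sob{g}{\Hdot^t}$ with $\nrm{(c_k)}_{\ell^2},\nrm{(d_k)}_{\ell^2}\le C$. (For $\sum_k \lpk g$ below we use a fattened block, e.g.\ $\sum_{|i-k|\le1}\triangle_i g$, which obeys the same bound.) We then decompose
\[
fg=\sum_k S_{k-1}f\,\lpk g+\sum_k S_{k-1}g\,\lpk f+\sum_{|k-k'|\le1}\lpk f\,\triangle_{k'}g=:T_fg+T_gf+R(f,g),
\]
and bound each of the three pieces separately in $\Hdot^{s+t-1}$.

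\textbf{Paraproduct terms.} For $T_fg$, the Fourier support of $S_{k-1}f\,\lpk g$ lies in an annulus of size $2^k$. Hence $\lpj T_fg$ only involves $|k-j|\le 4$.

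Bernstein (\cref{L:Bernstein} with $p=2$, $q=\infty$, which costs a factor $2^{l}$ in two dimensions) gives
\[
\nrm{S_{k-1}f}_{L^\infty}\le C\sum_{l\le k-2}2^{l}\nrm{\triangle_l f}_{L^2}
= C\sum_{l\le k-2}2^{l(1-s)}c_l\Sob{f}{\Hdot^s}\le C2^{k(1-s)}\tilde c_k\Sob{f}{\Hdot^s}.
\]
Here $\tilde c_k=\sum_{l\le k-2}2^{(l-k)(1-s)}c_l$ lies in $\ell^2$ by Young's inequality for discrete convolutions. This step is exactly where $s<1$ is used.

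Therefore
\[
2^{j(s+t-1)}\nrm{\lpj T_fg}_{L^2}\le C\sum_{|k-j|\le4}\tilde c_k d_k\,\Sob{f}{\Hdot^s}\Sob{g}{\Hdot^t}.
\]
The right-hand side is square-summable in $j$, since $\tilde c_kd_k\in\ell^1\subset\ell^2$. The term $T_gf$ is handled identically, using $t<1$.

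\textbf{Remainder.} For $R(f,g)$, the summand indexed by $k$ has Fourier support in a ball of radius $C2^k$. So $\lpj R$ only involves $k\ge j-3$.

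Here we use Bernstein from $L^1$ to $L^2$, which costs $2^{j}$, together with Cauchy--Schwarz:
\[
\nrm{\lpj R}_{L^2}\le C2^{j}\sum_{k\ge j-3}\nrm{\lpk f}_{L^2}\nrm{\tilde\triangle_k g}_{L^2}
\le C2^{j}\sum_{k\ge j-3}2^{-k(s+t)}c_kd_k\Sob{f}{\Hdot^s}\Sob{g}{\Hdot^t}.
\]
Multiplying by $2^{j(s+t-1)}$ produces the kernel $\sum_{k\ge j-3}2^{(j-k)(s+t)}c_kd_k$. This kernel is summable precisely because $s+t>0$, and Young's inequality again gives an $\ell^2$ (in fact $\ell^1$) sequence in $j$.

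\textbf{Main obstacle.} The obstacle is less any single estimate than making the decomposition legitimate in the homogeneous setting. The series defining $T_fg$, $T_gf$ and $R$ must converge in $\mathscr{Q}'(\RR^2)$, and the limit must be identified with $fg$ as an element of $\Hdot^{s+t-1}$. I would first prove the estimate for $f,g$ with Fourier transforms compactly supported away from the origin, which are dense. In that case all sums are finite and the identity is trivial. The bilinear bound then extends by density.

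The condition $s+t-1<1$, which follows from $s,t<1$, ensures that $\Hdot^{s+t-1}$ is a genuine space of tempered distributions, so the limit is well defined. The three exponent restrictions enter exactly once each: $s<1$ in $T_fg$, $t<1$ in $T_gf$, and $s+t>0$ in $R$.
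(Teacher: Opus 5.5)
Your Bony paraproduct argument is correct. It is the standard proof in \cite{BahouriCheminDanchinBook2011}; the paper gives no proof of \cref{L:Sobolev:product} and only cites that reference, so your route coincides with the intended one.

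There is one small slip to fix. With the paper's cutoffs ($\supp\chi\subset\Bcal(1)$, $\supp\varphi\subset\Acal(2^{-1},2)$), $S_{k-1}f$ has Fourier support reaching radius $2^{k-1}$. That is exactly the inner radius of $\Acal_k$, so $S_{k-1}f\,\lpk g$ is only localized in $\Bcal(5\cdot 2^{k-1})$, not in an annulus. Use $S_{k-2}$ in both paraproducts and $|k-k'|\le 2$ in the remainder; your estimates then go through unchanged up to constants.
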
 
As mentioned earlier, to control the trilinear terms appearing in the proof of \cref{T:main}, we will reformulate them in terms of commutators which are then estimated using general estimates as stated in \cref{L:FJKM26lemma}, \cref{L:JKM22lemma}, and \cref{L:mainlemma}.
We remark that \cref{L:JKM22lemma} for the particular case of $i=j$ was proved in \cite{JollyKumarMartinez2020b}. The proof in the current setting of $i\sim j$ is almost identical and is therefore omitted to avoid redundancy. The proof of \cref{L:FJKM26lemma} in the setting of $\Om=\TT^2$ was presented in \cite{FuJollyKumarMartinez2026}. The proof in the present case of $\Om=\RR^2$ follows by a similar argument. 

{
\begin{lemma}
\label{L:FJKM26lemma}
    For $s\in (0,1)$, let $\rho \in [0,s]$. Let $f,g,h\in \mathcal{S}(\mathbb{R}^2)$. Given $i, j\in \mathbb{Z}$ such that $|i-j|\le k$ for a positive integer $k$, suppose that $\text{supp} \ \hat{f}\subset \mathcal{A}_i$ and $\text{supp} \ \hat{h}\subset \mathcal{A}_j$. Then there exists a constant $C>0$, depending on $s,k$ and $\rho$ such that
    \[|\lbn [\la^{-s}\nabla,g]f,h\rbn|\le C\Sob{\mathcal{F}(\la^{1-s+\rho} g)}{L^1(\RR^2)}\Sob{\la^{-\rho}f}{L^2(\RR^2)}\Sob{h}{L^2(\RR^2)}.\]
\end{lemma}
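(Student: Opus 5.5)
We pass to the Fourier side and write the trilinear form as an explicit convolution integral. The commutator $[\la^{-s}\nabla,g]f=\la^{-s}\nabla(gf)-g\la^{-s}\nabla f$ has Fourier transform
\[
\mathcal{F}([\la^{-s}\nabla,g]f)(\xi)=2\pi i\int_{\RR^2}\Big(\xi|\xi|^{-s}-\eta|\eta|^{-s}\Big)\hat g(\xi-\eta)\hat f(\eta)\,d\eta .
\]
Plancherel then gives
\[
\lbn [\la^{-s}\nabla,g]f,h\rbn=2\pi i\iint m(\xi,\eta)\,\hat g(\xi-\eta)\hat f(\eta)\overline{\hat h(\xi)}\,d\eta\,d\xi,
\]
where $m(\xi,\eta)=\xi|\xi|^{-s}-\eta|\eta|^{-s}$. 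By the support hypotheses, only $\eta\in\Acal_i$ and $\xi\in\Acal_j$ contribute. Since $|i-j|\le k$, both $|\xi|$ and $|\eta|$ are comparable to $2^j$, with constants depending only on $k$.

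The key step is a symbol bound of the form
\[
|m(\xi,\eta)|\le C|\xi-\eta|^{1-s+\rho}|\eta|^{-\rho}
\]
on this support, with $C=C(s,k,\rho)$. We would get it by splitting into two cases.

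\textbf{Case $|\xi-\eta|\le c\,2^{j}$.} Take $c$ small (depending on $k$) so that the segment from $\eta$ to $\xi$ stays in an annulus where $|\zeta|\sim 2^j$. The map $\zeta\mapsto\zeta|\zeta|^{-s}$ is smooth there with gradient $O(2^{-sj})$, so the mean value theorem gives $|m|\le C2^{-sj}|\xi-\eta|$. Because $|\xi-\eta|\lesssim 2^j$ and $1-s+\rho\le 1$, we have $|\xi-\eta|=|\xi-\eta|^{1-s+\rho}|\xi-\eta|^{s-\rho}\lesssim |\xi-\eta|^{1-s+\rho}2^{j(s-\rho)}$. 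Hence $|m|\lesssim |\xi-\eta|^{1-s+\rho}2^{-\rho j}\sim |\xi-\eta|^{1-s+\rho}|\eta|^{-\rho}$.

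\textbf{Case $|\xi-\eta|> c2^j$.} Here we simply use the triangle inequality: $|m|\le |\xi|^{1-s}+|\eta|^{1-s}\lesssim 2^{j(1-s)}$. We write $2^{j(1-s)}=2^{j(1-s+\rho)}2^{-j\rho}$. Since $1-s+\rho\ge 0$, we have $2^{j(1-s+\rho)}\lesssim |\xi-\eta|^{1-s+\rho}$ in this case. Combined with $2^{-j\rho}\sim|\eta|^{-\rho}$, this gives the same bound. The requirement $\rho\in[0,s]$, which keeps $1-s+\rho\in[1-s,1]$, is used in both cases.

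Inserting the symbol bound, we obtain
\[
|\lbn [\la^{-s}\nabla,g]f,h\rbn|\le C\iint |\xi-\eta|^{1-s+\rho}|\hat g(\xi-\eta)|\,|\eta|^{-\rho}|\hat f(\eta)|\,|\hat h(\xi)|\,d\eta\,d\xi .
\]
The right-hand side is $\int (G*F)(\xi)H(\xi)\,d\xi$, where $G=|\mathcal{F}(\la^{1-s+\rho}g)|$, $F=|\mathcal{F}(\la^{-\rho}f)|$ and $H=|\hat h|$. Young's inequality ($L^1*L^2\subset L^2$), Cauchy–Schwarz and Plancherel then give the claimed bound $C\Sob{\mathcal{F}(\la^{1-s+\rho}g)}{L^1}\Sob{\la^{-\rho}f}{L^2}\Sob{h}{L^2}$.

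The main obstacle is the uniform symbol estimate in the first case. One must check that $c$ can be chosen depending only on $k$ so that the segment avoids the origin, and that the constants stay uniform in $j$. Homogeneity handles the uniformity in $j$: rescaling by $2^{-j}$ reduces everything to a fixed annulus. Everything else is routine.
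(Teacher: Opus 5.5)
Your proof is correct. The pointwise bound $|\xi|\xi|^{-s}-\eta|\eta|^{-s}|\le C|\xi-\eta|^{1-s+\rho}|\eta|^{-\rho}$ on $\mathcal{A}_j\times\mathcal{A}_i$, obtained by the mean value theorem when $|\xi-\eta|\ll 2^j$ (using $\rho\le s$) and the trivial bound otherwise (using $1-s+\rho\ge 0$), gives exactly the stated estimate with constants uniform in $j$ once it is combined with Young's inequality $L^1*L^2\subset L^2$ on the Fourier side and Plancherel. The paper does not write out this proof; it points to the Fourier-based $\mathbb{T}^2$ argument of \cite{FuJollyKumarMartinez2026}, and your argument is the natural $\mathbb{R}^2$ version of that approach.
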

}
    \begin{lemma}\label{L:JKM22lemma}
Let $s \in [0,1)$, $\rho \in \mathbb{R}$. Let $f,g,h\in \mathcal{S}(\mathbb{R}^2)$. Given $i, j\in \mathbb{Z}$ such that $|i-j|\le k$ for a positive integer $k$, suppose that $\supp\hat{h}\subset{\Acal}_j$. Then there exists a sequence $\{c_i\}\in\ell^2(\ZZ)$ such that $\Sob{\{c_i\}}{\ell^2(\mathbb{Z})}\leq1$ and
    \begin{align*}
        |\lb [\Lam^{s+\rho+1}\Delta_i,g]f,h\rb|\leq Cc_{i}
              (\Sob{\mathcal{F}(\Lam g)}{L^1(\RR^2)}+\Sob{\Lam^2 g}{L^2(\RR^2)})\Sob{\Lam^s f}{L^2(\RR^2)}\Sob{\Lam^\rho h}{L^2(\RR^2)},
    \end{align*}   
    \begin{align*}
        |\lb[\Lam^{s+\rho}\nabla\Delta_i,g]f,h\rb|\leq Cc_{i}
              (\Sob{\mathcal{F}(\Lam g)}{L^1(\RR^2)}+\Sob{\Lam^2 g}{L^2(\RR^2)})\Sob{\Lam^s f}{L^2(\RR^2)}\Sob{\Lam^\rho h}{L^2(\RR^2)}
    \end{align*}  
hold for some constant $C>0$, depending only on $s,\rho,k$. 
\end{lemma}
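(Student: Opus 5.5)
This is a Fourier-side estimate, and I will prove it by writing the trilinear form explicitly in frequency variables. The two operators act differently: $\Lam^{s+\rho+1}\lpj$ (and $\Lam^{s+\rho}\nabla\lpi$) act on frequencies, while $g$ acts by multiplication. By Plancherel,
\[
\lb [\Lam^{s+\rho+1}\Delta_i,g]f,h\rb=\int\!\!\int \big(m_i(\xi)-m_i(\eta)\big)\hat g(\xi-\eta)\hat f(\eta)\overline{\hat h(\xi)}\,d\eta\,d\xi,
\]
where $m_i(\xi)=|\xi|^{s+\rho+1}\varphi_i(\xi)$, or $m_i(\xi)=2\pi i\xi|\xi|^{s+\rho}\varphi_i(\xi)$ in the second case. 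Since $\supp\hat h\subset\Acal_j$ and $|i-j|\le k$, we have $|\xi|\sim 2^i$ throughout, with constants depending on $k$. I will split $\hat g(\xi-\eta)$ according to the size of $|\xi-\eta|$ relative to $2^{i}$, cutting at $|\xi-\eta|\le 2^{i-k-3}$ versus $|\xi-\eta|> 2^{i-k-3}$. This is a low/high decomposition of $g$, $g=S_{i-k-3}g+H_{i-k-3}g$ in the paper's notation.

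\emph{Low-frequency part of $g$.} Here $|\eta|\sim|\xi|\sim 2^i$, so $\eta$ also lies in a fixed dilate of $\Acal_i$. The mean value theorem applied to the smooth, compactly supported symbol gives
\[
|m_i(\xi)-m_i(\eta)|\le C2^{i(s+\rho)}|\xi-\eta|.
\]
Split $2^{i(s+\rho)}\approx|\eta|^s|\xi|^\rho$. The form is then bounded by
\[
\int\!\!\int|\xi-\eta||\hat g(\xi-\eta)|\,|\eta|^s|\hat f(\eta)|\,\mathbf 1_{\eta\sim 2^i}\,|\xi|^\rho|\hat h(\xi)|\,d\eta\,d\xi .
\]
Young's inequality ($L^1*L^2\to L^2$) followed by Cauchy--Schwarz yields the bound
\[
\Sob{\mathcal F(\Lam g)}{L^1}\,\Sob{\Lam^s\widetilde{\Delta}_if}{L^2}\,\Sob{\Lam^\rho h}{L^2},
\]
where $\widetilde\Delta_i$ is a fattened dyadic block. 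So this term is controlled with $c_i:=\Sob{\Lam^s\widetilde\Delta_if}{L^2}/\Sob{\Lam^sf}{L^2}$, which is in $\ell^2$ with norm bounded by a constant by \eqref{norm:equivalence}. Normalizing that constant into $C$ gives $\Sob{\{c_i\}}{\ell^2}\le1$.

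\emph{High-frequency part of $g$.} Now $|\xi-\eta|\gtrsim 2^i$. I treat the two pieces of the commutator separately.

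For the $m_i(\xi)$ piece, i.e. $\lbn g f,\Lam^{s+\rho+1}\lpi h\rbn$: put $|\xi|^{s+\rho+1}\approx 2^{i}|\xi|^{\rho}2^{is}$. I would estimate $\hat g\,*\,\hat f$ on $|\xi|\sim2^i$ via the product estimate \cref{L:Sobolev:product} or directly via Bernstein. The main concern is the $\eta$-integration: $\hat f(\eta)$ is not localized, since $|\eta|$ may be small or large. For $|\eta|\lesssim2^i$, I use $|\xi-\eta|\gtrsim 2^i$, trading $2^{i}\cdot 2^{is}$ against $|\xi-\eta|^2$ times $|\eta|^{s}\cdot|\eta|^{-s}2^{is}$. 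The $\eta$-integral is bounded in $L^2$ by Cauchy--Schwarz against $|\eta|^{-2s}$ over the ball, which is integrable because $s<1$ in two dimensions; this is exactly where $s\in[0,1)$ is needed. It produces the factor $\Sob{\Lam^2 g}{L^2}$ with the gain $2^{is}\cdot 2^{i(1-s)}\cdot 2^{-2i}\cdot 2^{i}$, which balances. For $|\eta|\gg2^i$, we have $|\xi-\eta|\sim|\eta|$, so $2^{i(s+1)}\le|\xi-\eta|^{1}|\eta|^{s}$ and Young with $\mathcal F(\Lam g)\in L^1$ suffices.

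For the $m_i(\eta)$ piece, $\eta\in\Acal_i$ is localized, and the same bookkeeping applies with $|\eta|^s$ taken directly.

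In every case the resulting bound carries either a factor localized at $\Lam^s\widetilde\Delta_if$ (giving $c_i$), or a factor of $2^{i}$-localization for $h$. In the latter case I use the fact that $h$ sits in $\Acal_j$ with $|i-j|\le k$, so $c_i$ can be taken from $\Sob{\Lam^\rho\Delta_j h}{L^2}/\Sob{\Lam^\rho h}{L^2}$ instead. Summing finitely many such sequences, and normalizing as before, gives the required $\{c_i\}$.

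The main obstacle is the high–low interaction where $\hat g$ is at high frequency and $\hat f$ is at low frequency. The needed derivative count forces the use of $\Sob{\Lam^2g}{L^2}$ together with the two-dimensional integrability $\int_{|\eta|\le2^i}|\eta|^{-2s}\,d\eta\sim2^{2i(1-s)}$. This is what restricts the lemma to $s<1$, and I would check this power count first. The vector-valued symbol case for the second inequality is identical, since only the bounds $|\nabla m_i|\lesssim2^{i(s+\rho)}$ and $|m_i|\lesssim 2^{i(s+\rho+1)}$ were used.
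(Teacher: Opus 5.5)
Your Fourier-side setup is the right one. It is the frequency-space form of the low/high (paraproduct) splitting behind the argument the paper defers to. Your power counting is also correct in every regime: the mean-value bound when $|\xi-\eta|\le 2^{i-k-3}$, the $m_i(\eta)$ piece, the $|\eta|\gg 2^i$ piece, and the high--low piece via $\Sob{\Lam^2 g}{L^2}$ and $\int_{|\eta|\lesssim 2^i}|\eta|^{-2s}\,d\eta\sim 2^{2i(1-s)}$.

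The gap is the $\ell^2$ sequence in the $m_i(\xi)$ piece when $g$ is at high frequency. There your two bounds are $C\Sob{\Lam^2 g}{L^2}\Sob{\Lam^s f}{L^2}\Sob{\Lam^\rho h}{L^2}$ and $C\Sob{\mathcal{F}(\Lam g)}{L^1}\Sob{\Lam^s f}{L^2}\Sob{\Lam^\rho h}{L^2}$. Neither contains anything square-summable in $i$, and you propose to extract $c_i$ from the localization of $h$. That cannot work, for three reasons:
\begin{enumerate}
\item The function $h$ is already localized to $\Acal_j$ and enters the right-hand side as $\Sob{\Lam^\rho h}{L^2}$. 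Your ratio $\Sob{\Lam^\rho\lpj h}{L^2}/\Sob{\Lam^\rho h}{L^2}$ does not depend on $i$ at all.
\item That ratio is not bounded below (take $\hat h$ concentrated where $\varphi_j$ is small), so it does not even dominate your unweighted bound.
\item If $c_i$ may depend on $h$, the $\ell^2$ condition is vacuous. But the lemma is used in \cref{L:mainlemma} by summing $c_j$ times norms of $\lpk h$ over $|j-k|\le 1$ with Cauchy--Schwarz. That needs a single sequence $\{c_j\}\in\ell^2$ depending only on $f$ and $g$. Otherwise you are left with an $\ell^1$ sum over the dyadic blocks of $h$, which is not controlled by its $L^2$-based Sobolev norm.
\end{enumerate}

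The fix uses information you already have. If $|\eta|\lesssim 2^i$, then also $|\xi-\eta|\le|\xi|+|\eta|\lesssim 2^i$. Together with $|\xi-\eta|>2^{i-k-3}$, this means only dyadic blocks of $g$ with index within $O(k)$ of $i$ contribute. Your Cauchy--Schwarz computation then gives $\Sob{\Lam^2\widetilde{\Delta}_i g}{L^2}$ in place of $\Sob{\Lam^2 g}{L^2}$, so you can take $c_i\propto\Sob{\Lam^2\widetilde{\Delta}_i g}{L^2}/\Sob{\Lam^2 g}{L^2}$.

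If $|\eta|\gg 2^i$, keep the gain you discarded. For $|\eta|\sim 2^{i'}$ one has $|\xi-\eta||\eta|^s\sim 2^{i'(1+s)}$, so decomposing $f=\sum_{i'}\Delta_{i'}f$ yields
\[
c_i\propto \frac{1}{\Sob{\Lam^s f}{L^2}}\sum_{i'\ge i}2^{-(i'-i)(1+s)}\Sob{\Lam^s\Delta_{i'}f}{L^2}.
\]
This is in $\ell^2$ by Young's inequality for sequences ($\ell^1*\ell^2\subset\ell^2$) and \eqref{norm:equivalence}. Adding these two sequences to the one from your low-frequency case and normalizing gives the lemma with $\{c_i\}$ independent of $h$, as the application requires.
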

Next, we prove an estimate for the commutators as they appear in the nonlinear term, but nevertheless we believe it to be of independent interest. The operators $\Lam^{-\si}\nabla^{\perp}$ correspond to a non-zero-order singular operator, which requires a finer decomposition using Littlewood-Paley operators, along with estimation using \cref{L:FJKM26lemma} and \cref{L:JKM22lemma}.
\begin{lemma}\label{L:mainlemma}
    Let $\si\in (0,1)$ and $\nu \in [0,1-\si)$. Let $f,g \in \mathcal{S}(\RR^2)$. Then
    \begin{align*}
        \Sob{[\Lam^{-\si}\nabla^{\perp}\cdot, \nabla g]f}{\Hdot^{\si+\nu}(\RR^2)}\le C(\Sob{\mathcal{F}(\nabla \Lam g)}{L^1(\RR^2)}+\Sob{\nabla \Lam^2 g}{L^2(\RR^2)})\Sob{f}{\Hdot^{\nu}(\RR^2)}.
    \end{align*}
\end{lemma}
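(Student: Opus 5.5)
The plan is to argue by duality, using Littlewood--Paley decompositions of all three functions and reducing to \cref{L:FJKM26lemma} and \cref{L:JKM22lemma} in the near-diagonal regime, with Bernstein (\cref{L:Bernstein}) handling the far-from-diagonal interactions.

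First I would rewrite the commutator. Since $\nabla^{\perp}\cdot\nabla g=0$, we have
\[
[\Lam^{-\si}\nabla^{\perp}\cdot,\nabla g]f=\Lam^{-\si}(\nabla g\cdot\nabla^{\perp}f)-\nabla g\cdot\nabla^{\perp}\Lam^{-\si}f .
\]
This is a sum over $k,l$ of terms $\pm[\Lam^{-\si}\bdy_k,\bdy_l g]f$, each of the type treated in \cref{L:FJKM26lemma} with $s=\si$ and $\bdy_l g$ in place of $g$.

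Next I would pass to the dual formulation,
\[
\Sob{\cdot}{\Hdot^{\si+\nu}}=\sup\bigl\{|\lbn\,\cdot\,,h\rbn| : \Sob{h}{\Hdot^{-\si-\nu}}\le1\bigr\},
\]
and write $f=\sum_i\lpj f$ (indexed by $i$) and $h=\sum_j\Delta_j h$. For each pair $(i,j)$ I would split $g$ according to frequency into three pieces.
\begin{enumerate}
\item A low part $S_{i-N}g$. In this case $|i-j|\le N+2$ by support considerations.
\item A high part with comparable frequency, $\sum_{|m-i|<N}\Delta_m g$.
\item A much-higher part $\sum_{m\ge i+N}\Delta_m g$, which forces $m\sim j$.
\end{enumerate}

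For the low part I would apply \cref{L:FJKM26lemma} with $\rho=\si$. This gives
\[
\Sob{\mathcal{F}(\Lam\bdy_l g)}{L^1}\,\Sob{\Lam^{-\si}\lpj f}{L^2}\,\Sob{\Delta_j h}{L^2}\lesssim \Sob{\mathcal{F}(\nabla\Lam g)}{L^1}\,\bigl(2^{i\nu}\Sob{\lpj f}{L^2}\bigr)\bigl(2^{-j(\si+\nu)}\Sob{\Delta_jh}{L^2}\bigr),
\]
where the powers of $2$ balance because $i\sim j$. Cauchy--Schwarz in $i$ and \eqref{norm:equivalence} then sum the series. \cref{L:JKM22lemma} is the alternative if a localized operator $\Delta_i$ needs to be commuted through.

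For the two high parts of $g$ the commutator structure is not needed. I would estimate $\Lam^{-\si}(\nabla g\cdot\nabla^\perp f)$ and $\nabla g\cdot\nabla^\perp\Lam^{-\si}f$ separately, using Bernstein and the bound $\Sob{\Delta_m\nabla g}{L^\infty}\lesssim \Sob{\mathcal{F}(\nabla\Delta_m g)}{L^1}$. In the regime where $g$ is at frequency $2^m\sim2^j\gg2^i$ and $f$ is at low frequency, the factor $2^{i(1-\si-\nu)}$ is summed against $2^{-m(1-\si-\nu)}$ with the help of $\Sob{\nabla\Lam g}{}$ weights. This geometric sum converges exactly because $\nu<1-\si$, which is where that hypothesis enters.

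In the high--high regime ($m\sim i\gtrsim j$, output at low frequency $2^j$) I would place $\Delta_m\nabla g$ in $L^2$ via $\Sob{\nabla\Lam^2 g}{L^2}$ and $\lpj f$ in $L^2$. The product then lies in $L^1$, and Bernstein from $L^1$ to $L^2$ at frequency $2^j$ costs $2^{j}$. The resulting factor $2^{j(\si+\nu)}$ against $2^{-i(\si+\nu)}$ is summable because $\si+\nu>0$.

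I expect the main obstacle to be this high--high regime together with the bookkeeping of exponents. The two terms of the commutator have different powers of $\Lam$ falling on $f$ versus the output, so one must check that every off-diagonal sum is geometric under just $\si\in(0,1)$ and $0\le\nu<1-\si$. If the direct Bernstein bound fails to close, I would fall back on \cref{L:JKM22lemma} with $s=\nu$ and $\rho=-\si-\nu$ to get the $\ell^2$ sequence $c_i$ and sum that way.
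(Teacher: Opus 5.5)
Your argument is correct, but it is organized differently from the paper's. The paper never splits $g$. It writes the pairing as $\sum_{|j-k|\le1}\lbn\lpj([\Lam^{-\si}\nabla^\perp\cdot,\nabla g]f),\lpk h\rbn$ and commutes $\lpj$ through the operator. This leaves a diagonal term $\lbn[\Lam^{-\si}\nabla^\perp\cdot,\nabla g]\lpj f,\lpk h\rbn$, treated by \cref{L:FJKM26lemma} with $s=\rho=\si$ and the \emph{full} $g$. It also leaves two terms containing $[\lpj,\nabla g]$, treated by \cref{L:JKM22lemma} with $s+\rho+1=0$ (with $s=\si+\nu$, resp.\ $s=\nu$). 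In the paper, the hypothesis $\nu<1-\si$ appears as the requirement $s=\si+\nu<1$ of that lemma. Every off-diagonal interaction is absorbed into the $\ell^2$ sequence $c_j$ of \cref{L:JKM22lemma}, whose proof the paper omits.

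You instead carry out that paraproduct analysis by hand. The commutator structure is used only against $S_{i-N}g$, and the other pieces of $g$ are handled term by term with H\"older and Bernstein. This is longer, but among the two commutator lemmas it needs only \cref{L:FJKM26lemma}, and it shows where each hypothesis is used. The condition $\nu<1-\si$ is needed for $\nabla g\cdot\nabla^\perp\Lam^{-\si}f$ when $g$ is much higher in frequency than $f$. The norm $\Sob{\nabla\Lam^2 g}{L^2}$ is needed in the high--high regime: putting $\nabla\Delta_m g$ in $L^\infty$ for $\Lam^{-\si}(\nabla g\cdot\nabla^\perp f)$ there gives only a factor $2^{-(i-j)\nu}$, which is not summable over $j\le i$ when $\nu=0$.

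Two small bookkeeping points:
\begin{enumerate}
\item With $\nabla\Delta_m g$ in $L^2$ at cost $2^{-2m}\Sob{\Delta_m\nabla\Lam^2 g}{L^2}$, the high--high decay factors are actually $2^{-(i-j)(1+\nu)}$ and $2^{-(i-j)(1+\si+\nu)}$ for the two terms, which only helps. The exponent $\si+\nu$ you quote comes from the $L^\infty$ placement in the second term.
\item Take $N\ge 2$, and assign $m=i-N$ to one of the pieces. Then the three parts of $g$ actually sum to $g$, and the low part keeps the output in an annulus of size $2^i$.
\end{enumerate}
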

\begin{proof}
    Let $h\in \mathcal{S}(\RR^2)$. Then, 
    \begin{align*}
        \lb [\Lam^{-\si}\nabla^{\perp}\cdot, \nabla g]f, h\rb=\sum_{|j-k|\le 1}\lb\Delta_j \left([\Lam^{-\si}\nabla^{\perp}\cdot, \nabla g]f\right),\Delta_k h\rb.
    \end{align*}
We have the decomposition
\begin{align*}
      \lb [\Lam^{-\si}\nabla^{\perp}\cdot, \nabla g]f, h\rb=&\sum_{|j-k|\le 1}\left \{ \lb [\la^{-\si}\nabla^{\perp}\cdot,\nabla g ]\lpj f,\lpk h \rb \right.\\& \hspace{4 em}  \left.-\lb [\lpj,\nabla g]\cdot \nabla^{\perp} \la^{-\si}f,\lpk h \rb \right.\\& \hspace{4 em}  \left.  +\lb [\lpj,\nabla g]\cdot \nabla^{\perp} f,\lpk \la^{-\si}h\rbn\right\}\\
      =&\sum_{|j-k|\le 1} I_1+I_2+I_3
\end{align*}
which can be verified by expanding each of $I_1$, $I_2$ and $I_3$. We now estimate each of the terms $I_1, I_2,$ and $I_3$ individually.

Applying  \cref{L:FJKM26lemma} with $s=\rho=\si$ and then Bernstein's inequality to find
\begin{align*}
    |I_1|\le& C\|\widehat{\nabla \la g}\|_{L^1(\RR^2)}
    \Sob{\lpj\la^{-\si}f}{L^2(\RR^2)}\Sob{\lpk h}{L^2(\RR^2)} \\
    \le&C\|\widehat{\nabla \la g}\|_{L^1(\RR^2)}
    \Sob{\lpj\la^{-\nu}f}{L^2(\RR^2)}\Sob{\lpk \Lam^{-\si-\nu} h}{L^2(\RR^2)}.
\end{align*}
Next observe that 
\[I_2=\lb [\lpj,\nabla g]\cdot \nabla^{\perp} \la^{-\si}f,\lpk h\rb =-\lbn [\lpj,\nabla g] \la^{-\si}f \cdot,\nabla^{\perp}\lpk h\rb. \]
Applying Lemma \ref{L:JKM22lemma} with $s=\si+\nu,\, \rho=-\si-\nu-1$, we get
\begin{align*}
    |I_2|&\le Cc_j \left(\Sob{ \widehat{\nabla \Lam g}}{L^1(\RR^2)}+\Sob{\nabla \la^{2}g}{L^2(\RR^2)}\right)\Sob{\la^{\nu}f}{L^2(\RR^2)}\Sob{\lpk \la^{\si+\nu}\mathcal{R}^{\perp}h}{L^2(\RR^2)}\\
    &\le Cc_j \left(\Sob{ \widehat{\nabla \Lam g}}{L^1(\RR^2)}+\Sob{\nabla \la^{2}g}{L^2(\RR^2)}\right)\Sob{\la^{\nu}f}{L^2(\RR^2)}\Sob{\lpk \la^{\si+\nu}h}{L^2(\RR^2)},
\end{align*}
{for some $c_j \in \ell^{2}(\mathbb{Z})$}. We next apply Lemma \ref{L:JKM22lemma} with $s=\nu,\, \rho=-1-\nu$ on
\[I_3=-\lb [\lpj,\nabla g]\cdot \nabla^{\perp} f,\lpk \la^{-\si}h \rb=\lb [\lpj,\nabla g] f \cdot,\nabla^{\perp}\lpk \la^{-\si}h\rb,\]
to obtain an identical upper bound on $|I_3|$. We bound the sum over $|j-k|\le 1$ of the resulting estimates from $I_1$, $I_2$, and $I_3$ by applying the Cauchy-Schwarz inequality and using the fact that $c_j \in \ell^{2}(\mathbb{Z})$ to obtain
\begin{align*}
    |\lb [\Lam^{-\si}\nabla^{\perp}\cdot, \nabla g]f, h\rb|\le C(\Sob{\mathcal{F}(\nabla \Lam g)}{L^1(\RR^2)}+\Sob{\nabla \Lam^2 g}{L^2(\RR^2)})\Sob{f}{\Hdot^{\nu}}\Sob{h}{\Hdot^{-\si-\nu}(\RR^2)}.
\end{align*}
Since $h$ was an arbitrary element of $\mathcal{S}(\RR^2)$, this completes the proof.\par
We now state the definition of weak solutions to the dissipative gSQG equations.
\begin{definition}
    A weak solution to \eqref{gSQG} on $(0,T)\times \RR^2$ is a distribution $\tht \in L^\infty ((0,T);\Hdot^{\frac{\be}{2}-1}(\RR^2))\cap L^2 ((0,T);\Hdot^{\al+\frac{\be}{2}-1}(\RR^2))$ which satisfies in $\mathcal{D}'((0,T)\times \RR^2)$:
    \[\bdy_t \tht+\nabla\cdot(u\tht)+\nu(-\Delta)^\al \tht=0.\]
\end{definition}
\begin{remark}
For the rest of the paper, we adopt the convention that $C$ denotes a positive constant which may change from line-to-line. Dependencies on other parameters may be specified when they are relevant or necessary.
\end{remark}
 \end{proof}
 \section{Structure of the nonlinearity}
 In this section, we discuss the crucial observation that permits the representation of the nonlinear term in terms of commutators and makes precise its definition as a well-defined tempered distribution.
 \begin{lemma}
     Let $\tht \in L^2(\RR^2)$ such that $\hat{\tht}$ is supported away from $0$ and $\varphi \in \mathcal{D}(\RR^2)$. Then, we have 
     \begin{align}\label{identity:commutator}
          \int_{\RR^2} \tht (\Lam^{\be-2}\nabla^{\perp}\tht) \cdot \nabla \varphi\,dx=-\int_{\RR^2} \tht [\Lam^{\frac{\be}{2}-1}\nabla^{\perp}\cdot,\nabla \varphi](\Lam^{\frac{\be}{2}-1}\tht) \,dx.
     \end{align}
 \end{lemma}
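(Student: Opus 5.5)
The plan is to verify \eqref{identity:commutator} by direct expansion of the commutator. The key cancellation comes from the antisymmetry of $\nabla^{\perp}$ against a gradient. Set $\psi:=\Lam^{\frac{\be}{2}-1}\tht$. Since $\hat{\tht}$ is supported away from the origin, $\psi\in L^2(\RR^2)$ is well defined. Moreover,
\[\Lam^{\frac{\be}{2}-1}\nabla^{\perp}\psi=\Lam^{\be-2}\nabla^{\perp}\tht.\]
With the convention used in \cref{L:mainlemma}, the commutator acting on a scalar $\psi$ reads
\[[\Lam^{\frac{\be}{2}-1}\nabla^{\perp}\cdot,\nabla\varphi]\psi=\Lam^{\frac{\be}{2}-1}\nabla^{\perp}\cdot(\psi\nabla\varphi)-\nabla\varphi\cdot\Lam^{\frac{\be}{2}-1}\nabla^{\perp}\psi .\]

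I will first carry out the computation for $\tht\in\mathscr{S}(\RR^2)$ with $\hat\tht$ compactly supported in an annulus, so that every manipulation is legitimate. Then
\[-\int_{\RR^2}\tht\,[\Lam^{\frac{\be}{2}-1}\nabla^{\perp}\cdot,\nabla\varphi]\psi\,dx=-\int_{\RR^2}\tht\,\Lam^{\frac{\be}{2}-1}\nabla^{\perp}\cdot(\psi\nabla\varphi)\,dx+\int_{\RR^2}\tht\,(\Lam^{\be-2}\nabla^{\perp}\tht)\cdot\nabla\varphi\,dx .\]
The second term is exactly the left-hand side of \eqref{identity:commutator}, so it remains to show that the first term vanishes. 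I will use Plancherel and the self-adjointness of $\Lam^{\frac{\be}{2}-1}$ to move it onto $\tht$, and integrate by parts, using that the adjoint of $\nabla^{\perp}\cdot$ is $-\nabla^{\perp}$. This gives
\[\int_{\RR^2}\tht\,\Lam^{\frac{\be}{2}-1}\nabla^{\perp}\cdot(\psi\nabla\varphi)\,dx=-\int_{\RR^2}\psi\,\nabla\varphi\cdot\nabla^{\perp}\psi\,dx=-\frac12\int_{\RR^2}\nabla\varphi\cdot\nabla^{\perp}(\psi^2)\,dx=\frac12\int_{\RR^2}\psi^2\,\nabla^{\perp}\cdot\nabla\varphi\,dx=0,\]
since $\nabla^{\perp}\cdot\nabla\varphi=\bdy_{x_2}\bdy_{x_1}\varphi-\bdy_{x_1}\bdy_{x_2}\varphi=0$. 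The boundary terms are absent because $\varphi$ has compact support and $\psi$ is smooth and decaying. This proves the identity in the smooth class.

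The remaining step, which I expect to be the main (if technical) obstacle, is to pass from this class to general $\tht\in L^2$ with $\hat\tht$ supported away from $0$. I will approximate $\tht$ by $\tht_n$, obtained by truncating $\hat\tht$ to large annuli and mollifying. The right-hand side is a continuous bilinear expression in $\tht$. Indeed, $\tht\in\Hdot^{\frac{\be}{2}-1}$ by the support assumption, and \cref{L:mainlemma} with $\si=1-\frac{\be}{2}$ shows that the commutator maps $\Hdot^{\nu}$ to $\Hdot^{\si+\nu}$ for suitable $\nu\in[0,1-\si)$. So the pairing converges as $\tht_n\to\tht$ in $L^2$; note that $\nabla\varphi$ satisfies the Fourier-$L^1$ and Sobolev hypotheses of \cref{L:mainlemma} since $\varphi\in\mathcal{D}$. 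For $\be\le1$ the operator $\Lam^{\be-2}\nabla^{\perp}$ is bounded on $L^2$ functions whose Fourier support avoids $0$, so the left-hand side also converges. For $\be>1$ the left-hand side is not a priori absolutely convergent. In that case I will understand it as the limit of the approximations, which exists precisely because it equals the right-hand side. This is the sense in which the nonlinear term is interpreted as a tempered distribution.
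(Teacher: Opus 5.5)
Your proof is correct and follows essentially the paper's route. The paper inserts the pointwise-vanishing term $\int\varphi\,(\Lam^{\frac{\be}{2}-1}\nabla^{\perp}\tht)\cdot(\Lam^{\frac{\be}{2}-1}\nabla\tht)$ and integrates by parts for the mollified $\rho_\eps*\tht$ to turn it into the extra commutator piece; this is the same cancellation you obtain from $\nabla^{\perp}\cdot\nabla\varphi=0$, and it is likewise followed by a limit $\eps\to0$. Your treatment of that limit is more explicit than the paper's bare ``letting $\eps$ go to $0$'': you get continuity of the right-hand side from \cref{L:mainlemma} with $\si=1-\frac{\be}{2}$, $\nu=0$, and for $\be>1$ you read the left-hand side as defined through the identity.
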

 \begin{proof}
     We have
     \begin{align*}
         \int_{\RR^2} \tht (\Lam^{\be-2}\nabla^{\perp}\tht) \cdot \nabla \varphi\,dx&=\int_{\RR^2} \tht (\Lam^{\be-2}\nabla^{\perp}\tht) \cdot \nabla \varphi\,dx+\int_{\RR^2} \varphi (\Lam^{\frac{\be}{2}-1}\nabla^{\perp}\tht) \cdot (\Lam^{\frac{\be}{2}-1}\nabla \tht)\,dx\\
     \end{align*}
 Let $\rho \in \mathcal{D}$ be such that $\int_{\RR^2} \rho \,dx=1$. Define by $\rho_\eps(x):=\eps^{-2}\rho(\eps^{-1}x)$. Then , we have
 \begin{align*}
     \int_{\RR^2} \varphi (\Lam^{\frac{\be}{2}-1}\nabla^{\perp}(\rho_\eps *\tht)) \cdot (\Lam^{\frac{\be}{2}-1}\nabla (\rho_\eps *\tht))\,dx&=-\int_{\RR^2} (\Lam^{\frac{\be}{2}-1}(\rho_\eps *\tht))( \Lam^{\frac{\be}{2}-1}\nabla^{\perp}(\rho_\eps*\tht)) \cdot \nabla \varphi\,dx\\
     &=-\int_{\RR^2} (\rho_\eps * \tht)\Lam^{\frac{\be}{2}-1}\nabla^{\perp}\cdot((\Lam^{\frac{\be}{2}-1}(\rho_\eps*\tht))\nabla \varphi)\,dx.
 \end{align*}
 Letting $\eps$ go to $0$, we get
 \begin{align*}
     \int_{\RR^2} \varphi (\Lam^{\frac{\be}{2}-1}\nabla^{\perp}\tht) \cdot (\Lam^{\frac{\be}{2}-1}\nabla \tht)\,dx=-\int_{\RR^2} \tht \Lam^{\frac{\be}{2}-1}\nabla^{\perp}\cdot ((\Lam^{\frac{\be}{2}-1}\tht)\nabla \varphi)
 \end{align*}
  \end{proof}
  The identity in \eqref{identity:commutator} allows us to define the nonlinear term $\nabla \cdot(u\tht)$ as a well-defined tempered distribution. This is made precise in  \cref{P:nonlinear:distribution}.
  \begin{proposition}\label{P:nonlinear:distribution} For $f\in \Hdot^{\frac{\be}{2}-1}(\RR^2)$, define the tempered distribution $\nabla \cdot (f (\Lam^{\be-2}\nabla^{\perp}f))$ by
  \begin{align*}
     \nabla \cdot (f (\Lam^{\be-2}\nabla^{\perp}f))=\nabla \cdot (f (\Lam^{\be-2}\nabla^{\perp}S_j f)) +\nabla \cdot (S_j f (\Lam^{\be-2}\nabla^{\perp}H_j f))+\nabla \cdot (H_j f (\Lam^{\be-2}\nabla^{\perp}H_j f)),
  \end{align*}
  where $j\in \mathbb{Z}$ and $\nabla \cdot (H_j f (\Lam^{\be-2}\nabla^{\perp}H_j f))$ is the distribution defined by
  \begin{align*}
      \lb \nabla \cdot (H_j f (\Lam^{\be-2}\nabla^{\perp}H_j f))|\varphi\rb=-\int_{\RR^2} H_j f [\Lam^{\frac{\be}{2}-1}\nabla^{\perp}\cdot,\nabla \varphi](\Lam^{\frac{\be}{2}-1}H_j f) \,dx.
  \end{align*}
  for all $\varphi \in \mathcal{D}(\RR^2)$.
  \end{proposition}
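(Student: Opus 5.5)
The proposition is essentially a definition, so the proof has two parts. First, each of the three terms on the right-hand side must be a tempered distribution, depending continuously on $f\in\Hdot^{\frac{\be}{2}-1}(\RR^2)$. Second, the resulting object must not depend on the choice of $j\in\ZZ$, and it must agree with the classical $\nabla\cdot(f\,\Lam^{\be-2}\nabla^{\perp}f)$ whenever $f$ is regular enough for the latter to make sense (e.g.\ $f\in L^2$ with $\hat f$ supported away from $0$). Throughout, the velocity $\Lam^{\be-2}\nabla^{\perp}$ has order $\be-1$.

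\textbf{The two paraproduct-type terms.} I would handle these with \cref{L:Sobolev:product}.
\begin{itemize}
\item \emph{First term.} $S_jf$ has compactly supported Fourier transform. Hence $\Lam^{\be-2}\nabla^{\perp}S_jf\in\Hdot^{t}$ for every $t\ge -\frac{\be}{2}$, with norm $\le C_{j,t}\Sob{f}{\Hdot^{\frac{\be}{2}-1}}$. Choose $t\in(1-\frac{\be}{2},1)$, which is possible since $\be>0$. Take $s=\frac{\be}{2}-1<1$, so that $s+t>0$. Then $f\,\Lam^{\be-2}\nabla^{\perp}S_jf\in\Hdot^{s+t-1}$, and its divergence is a tempered distribution.
\item \emph{Second term.} We have $S_jf\in\Hdot^{t}$ for any $t\ge\frac{\be}{2}-1$. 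Since $H_jf$ lives at frequencies $\gtrsim 2^{j}$, the velocity $\Lam^{\be-2}\nabla^{\perp}H_jf$ lies in $\Hdot^{-\be/2}$. Choose $t\in(\frac{\be}{2},1)$, which is possible since $\be<2$. The product lemma then places the product in $\Hdot^{t-\be/2-1}$.
\end{itemize}
In both cases the bounds are quadratic in $\Sob{f}{\Hdot^{\frac{\be}{2}-1}}$.

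\textbf{The commutator term.} Apply \cref{L:mainlemma} with $\si=1-\frac{\be}{2}\in(0,1)$, $\nu=0$, $g=\varphi$, and with $\Lam^{\frac{\be}{2}-1}H_jf\in L^2$ in place of $f$. This gives
\[
\Sob{[\Lam^{\frac{\be}{2}-1}\nabla^{\perp}\cdot,\nabla\varphi]\Lam^{\frac{\be}{2}-1}H_jf}{\Hdot^{1-\frac{\be}{2}}}\le C\bigl(\Sob{\mathcal F(\nabla\Lam\varphi)}{L^1}+\Sob{\nabla\Lam^2\varphi}{L^2}\bigr)\Sob{f}{\Hdot^{\frac{\be}{2}-1}}.
\]
Pairing this with $H_jf\in\Hdot^{\frac{\be}{2}-1}$ by Plancherel bounds the integral by $C(\varphi)\Sob{f}{\Hdot^{\frac{\be}{2}-1}}^2$. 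Here $C(\varphi)$ is controlled by finitely many Schwartz seminorms of $\varphi$, so the functional extends from $\mathcal D$ to $\mathscr S$ and is tempered.

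Since \cref{L:mainlemma} is stated for Schwartz functions, I would first prove the bound for $f$ with $\hat f\in C_c^\infty(\RR^2\setminus\{0\})$. I would then extend by density, using bilinearity and the quadratic bound. The same density argument shows that the map $f\mapsto$ (each term) is continuous from $\Hdot^{\frac{\be}{2}-1}$ into $\mathscr S'$. This continuity is what will later be needed to pass to the limit.

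\textbf{Independence of $j$ and consistency.} For $f$ with $\hat f\in C_c^\infty(\RR^2\setminus\{0\})$, $H_jf$ is an $L^2$ function with Fourier support away from $0$. The identity \eqref{identity:commutator} then shows that the third term equals the classical $\nabla\cdot(H_jf\,\Lam^{\be-2}\nabla^{\perp}H_jf)$. Summing the three terms recovers $\nabla\cdot(f\,\Lam^{\be-2}\nabla^{\perp}f)$ exactly, because $f=S_jf+H_jf$ and the expression is bilinear. So for such $f$ the definition is the classical one and is independent of $j$. By the continuity established above, independence of $j$ passes to all $f\in\Hdot^{\frac{\be}{2}-1}$, since both sides are continuous and agree on a dense subset.

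\textbf{Main obstacle.} I expect the hardest step to be making this density/continuity argument rigorous for the third term. The key point is that the quadratic form defined through the commutator is continuous, with a bound by \cref{L:mainlemma} that is uniform for fixed $j$. This is needed even though \eqref{identity:commutator} itself is only available for nice $\tht$. A related difficulty is that the frequency localization $H_j$ must be compatible with the approximating sequence. I would handle this by approximating through $\Delta_l$ truncations $\sum_{|l|\le N}\Delta_l f$, which commute with $S_j$ and $H_j$.
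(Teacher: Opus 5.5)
Your proposal is correct and follows the paper's route. The paper dismisses the first two terms as straightforward, and your product-lemma choices $t\in(1-\frac{\be}{2},1)$ and $t\in(\frac{\be}{2},1)$ supply the omitted details; it bounds the commutator term exactly as you do, via \cref{L:mainlemma} with $\si=1-\frac{\be}{2}$ and $\nu=0$, while your density argument and $j$-independence check are rigor the paper does not provide. One caveat for that extra step: by \eqref{identity:commutator} the stated right-hand side equals $+\int_{\RR^2} H_jf\,(\Lam^{\be-2}\nabla^{\perp}H_jf)\cdot\nabla\varphi\,dx$, which is minus the classical pairing $-\int_{\RR^2} F\cdot\nabla\varphi\,dx$ with $F=H_jf\,\Lam^{\be-2}\nabla^{\perp}H_jf$, so the sign in the definition must be flipped for your consistency and $j$-independence argument to go through (the bounds are unaffected).
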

  \begin{proof}
      It is straightforward to check that the first two terms are well defined tempered distributions. For the last term involving high frequencies, we apply \cref{L:mainlemma} with $\si=1-\be/2$ and $\nu=0$ to obtain
      \begin{align}\label{highfreq:bound}
          |\lb \nabla \cdot (H_j f (\Lam^{\be-2}\nabla^{\perp}H_j f))|\varphi\rb|&\le C(\Sob{\mathcal{F}(\Lam \nabla \varphi)}{L^1}+\Sob{\Lam^2 \nabla \varphi}{L^2(\RR^2)})\Sob{H_j f}{\Hdot^{\frac{\be}{2}-1}(\RR^2)}^2\notag\\
          &\le C\Sob{\varphi}{H^{3+\delta}(\RR^2)}\Sob{H_j f}{\Hdot^{\frac{\be}{2}-1}(\RR^2)}^2,\quad \delta>0.
      \end{align} 
  \end{proof}
Clearly, from the above proof, if $f$ is a function satisfying the hypotheses of the proposition, then, we have
\begin{align*}
    \nabla \cdot (f (\Lam^{\be-2}\nabla^{\perp}f))=\lim_{j \to +\infty}\left(\nabla \cdot (f (\Lam^{\be-2}\nabla^{\perp}S_j f)) +\nabla \cdot (S_j f (\Lam^{\be-2}\nabla^{\perp}H_j f))\right).
\end{align*}
\section{ Proof of \cref{T:main}}
In this section, we present the proof of the main result \cref{T:main}. Consider the following artificial viscosity approximation of \eqref{gSQG}:
\begin{align}\label{gSQG:approximation}
    \begin{cases}
        &\bdy_t \tht+\nu(-\Delta)^{\al}\tht-\eps\Delta \tht+u\cdot \nabla \tht=0,\\
        &u=\left(\bdy_{x_2}\Lam^{\beta-2}\tht,-\bdy_{x_1}\Lam^{\beta-2}\tht\right),\\
        &\tht(x,0)=(\rho_\eps *\tht_0)(x),\quad x\in \mathbb{R}^2,\,t>0,
    \end{cases}
\end{align}
where $\rho_\eps(x)=\eps^{-2}\rho(\eps^{-1}x)$, $\rho \in \mathcal{D}(\RR^2)$ and $\int_{\RR^2} \rho \,dx=1$. 
\begin{proposition}\label{P:approx:solution}
    Let $\be\in (0,2)$ and $\eps>0$. Then, the Cauchy problem for the approximate gSQG equation \eqref{gSQG:approximation} admits a unique solution $\tht^{\eps}(x,t)$ such that $\tht^{\eps}\in C([0,\infty);H^\si(\RR^2))\cap C^{\infty}((0,\infty)\times \RR^2)$ for any $\si>2$.
\end{proposition}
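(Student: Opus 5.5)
We prove \cref{P:approx:solution} with the standard parabolic route: local well-posedness in $H^\si$, $\si>2$, by a fixed point argument based on the heat semigroup $e^{\eps t\Delta}$; a priori bounds that exclude blow-up; and parabolic smoothing for the $C^\infty$ statement. First note that $\rho_\eps*\tht_0$ lies in $H^\si(\RR^2)$ for every $\si$, including its low frequencies. Indeed, the low-frequency part of $\hat\tht_0$ is $|\xi|^{1-\be/2}$ times an $L^2$ function, which is locally $L^2$ because $1-\be/2>0$. The high-frequency part is damped by $\hat\rho(\eps\xi)$.

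\textbf{Step 1 (local existence).} Write the equation in Duhamel form,
\[\tht(t)=e^{\eps t\Delta}\tht^\eps_0-\int_0^t e^{\eps(t-s)\Delta}\left(\nu\Lam^{2\al}\tht+\nabla\cdot(u\tht)\right)(s)\,ds.\]
We use the divergence form, which is valid since $\nabla\cdot u=0$. For $\si>2$:
\begin{itemize}
\item $H^\si$ is an algebra.
\item $u=\nabla^\perp\Lam^{\be-2}\tht$ is of order $\be-1<1$. We split $\tht$ into low and high frequencies. The high part of $u$ is bounded in $H^{\si-\be+1}\subset H^{\si-1}$. The low part has $\hat u$ bounded by $|\xi|^{\be-1}|\hat\tht|$, which is in $L^1\cap L^2$ near $0$ because $\be>0$. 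Hence $\|u\|_{L^\infty}+\|u\|_{H^{\si-1}}\le C\|\tht\|_{H^\si}$.
\item Consequently $\|u\tht\|_{H^{\si-1}}\le C\|\tht\|_{H^\si}^2$.
\end{itemize}
The heat semigroup gains one derivative at cost $(\eps(t-s))^{-1/2}$, which is integrable, and $\Lam^{2\al}$ with $\al\le1$ is handled the same way or absorbed as a linear term. So the Duhamel map is a contraction on a ball of $C([0,T_*];H^\si)$ for small $T_*$ depending on $\|\tht^\eps_0\|_{H^\si}$ and $\eps$. This gives local existence and uniqueness, and continuous dependence.

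\textbf{Step 2 (global continuation).} This is the main obstacle: we need an a priori $H^\si$ bound that grows at most exponentially.
\begin{itemize}
\item \emph{$L^2$ control.} Since $u$ is divergence free, the $L^2$ energy identity gives $\|\tht(t)\|_{L^2}\le\|\tht_0^\eps\|_{L^2}$ and $\eps\int_0^t\|\nabla\tht\|_{L^2}^2\,ds\le C$. The $L^p$ maximum principle for fractional diffusion plus heat also bounds $\|\tht\|_{L^\infty}$.
\item \emph{$H^\si$ estimate.} Apply $\Lam^\si$ and use the Kato--Ponce commutator estimate:
\[\tfrac{d}{dt}\|\Lam^\si\tht\|_{L^2}^2+2\eps\|\Lam^{\si+1}\tht\|_{L^2}^2\le C\left(\|\nabla u\|_{L^\infty}\|\Lam^\si\tht\|_{L^2}+\|\Lam^\si u\|_{L^2}\|\nabla\tht\|_{L^\infty}\right)\|\Lam^\si\tht\|_{L^2}.\]
\item \emph{Absorbing the bad term.} Since $\be<2$, $\|\Lam^\si u\|_{L^2}\le\|\Lam^{\si+\be-1}\tht\|_{L^2}$, which has order below $\si+1$. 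We interpolate it between $L^2$ and $\Lam^{\si+1}$ and absorb into the viscous term using Young's inequality.
\item \emph{Closing.} The remaining factors $\|\nabla u\|_{L^\infty}$ and $\|\nabla\tht\|_{L^\infty}$ are controlled by Gagliardo--Nirenberg, again interpolating with the dissipation. This yields a Gronwall inequality $\frac{d}{dt}Y\le C_\eps(1+Y)^{\gamma}$.
\item \emph{If $\gamma>1$.} In that case we would instead first bootstrap $\tht$ into $L^2_tH^1\to L^\infty_tH^1$, using the $H^1$ energy estimate, where the nonlinearity has lower order than $\eps\Delta$ because $\be<2$. We would then proceed inductively in regularity, each step linear in the top norm. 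This prevents blow-up, so $T_*=\infty$.
\end{itemize}

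\textbf{Step 3 (smoothness).} Repeat the $H^{\si'}$ estimate for arbitrary $\si'$, weighted by $t^{k}$, to get $\tht\in C((0,\infty);H^{\si'})$ for all $\si'$. Differentiate the equation in time to obtain time regularity. Sobolev embedding then gives $C^\infty((0,\infty)\times\RR^2)$.
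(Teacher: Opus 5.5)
\textbf{The route is right.} Your outline (Duhamel fixed point in $H^\si$, $L^2$ and maximum-principle bounds, commutator energy estimates, parabolic smoothing) is the standard argument. The paper gives no proof of this proposition and only refers to the literature, so there is no competing argument to compare with.

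\textbf{A false but harmless claim in Step 1.} For $\be<1$, $\hat\tht\in L^2$ does not make $|\xi|^{\be-1}\hat\tht$ square integrable near $0$. Take $\hat\tht=|\xi|^{-\kappa}\mathbf{1}_{\{|\xi|\le 1\}}$ with $\be\le\kappa<1$; then in general $u\notin H^{\si-1}$. What does hold is $\|u\|_{L^\infty}+\|u\|_{\Hdot^{\si-1}}\le C\|\tht\|_{H^\si}$. The Kato--Ponce product estimate with $u$ measured in $L^\infty$ still gives $\|u\tht\|_{H^{\si-1}}\le C\|\tht\|_{H^\si}^2$, so the contraction argument survives.

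\textbf{The genuine gap is Step 2.} This is the only substantive part, and you leave it conditional. The case $\gamma>1$ does occur. With only $\|\tht\|_{L^2}$ and $\|\tht\|_{L^\infty}$ available, your interpolation gives $\|\nabla u\|_{L^\infty}\|\Lam^\si\tht\|_{L^2}^2\lesssim X^{\be/\si+2\si/(\si+1)}$, where $X=\|\Lam^{\si+1}\tht\|_{L^2}$. The exponent exceeds $2$ once $\be>2\si/(\si+1)$.

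Your fallback is the right idea, but its stated justification fails. Relative to the $L^2$ energy, the regularized equation is supercritical for $\be>1$: under $\tht_\lambda(x,t)=\lambda^{2-\be}\tht(\lambda x,\lambda^2 t)$ the $L^2$ norm scales like $\lambda^{1-\be}$. Concretely, with $L^2$ information alone the $H^1$ estimate reads $\frac{d}{dt}\|\nabla\tht\|_{L^2}^2\lesssim\|\nabla\tht\|_{L^2}^{4/(2-\be)}$. Gronwall with $\int_0^t\|\nabla\tht\|_{L^2}^2\,ds<\infty$ closes this only for $\be\le 1$.

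What makes every $\be<2$ work is the maximum principle ($L^\infty$ scales like $\lambda^{2-\be}$), and it must be used explicitly at the $H^1$ level. Since $\nabla\cdot u=0$,
\[
\big|\int_{\RR^2}(u\cdot\nabla\tht)\,\Delta\tht\,dx\big|=\big|\int_{\RR^2}\bdy_k u_i\,\bdy_i\tht\,\bdy_k\tht\,dx\big|\le C\|\Lam^{\be}\tht\|_{L^2}\|\nabla\tht\|_{L^4}^2\le C\|\tht\|_{L^2}^{1-\frac{\be}{2}}\|\tht\|_{L^\infty}\|\Delta\tht\|_{L^2}^{1+\frac{\be}{2}}.
\]
The term $\eps\|\Delta\tht\|_{L^2}^2$ absorbs this because $1+\be/2<2$, hence $\|\nabla\tht(t)\|_{L^2}^2\le\|\nabla(\rho_\eps*\tht_0)\|_{L^2}^2+C_\eps t$.

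Once $\|\nabla\tht\|_{L^2}$ is bounded, interpolate every factor in your Kato--Ponce inequality between $\Hdot^1$ and $\Hdot^{\si+1}$. For instance, $\|\nabla u\|_{L^\infty}\le C\|\nabla\tht\|_{L^2}^{1-\be/\si}X^{\be/\si}$ and $\|\nabla\tht\|_{L^\infty}\le C\|\nabla\tht\|_{L^2}^{1-1/\si}X^{1/\si}$. Both terms then become $O(X^{(2\si+\be-2)/\si})$, with exponent below $2$ precisely because $\be<2$. So the $H^\si$ estimate closes by absorption in a single step, and no induction is needed.

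With these repairs, global continuation and your Step 3 go through.
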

Note that for any $\si\ge0$ 
\[\Sob{\rho_\eps *\tht_0}{H^\si(\RR^2)}\le C(\si,\eps)\Sob{\tht_0}{\Hdot^{\frac{\be}{2}-1}(\RR^2)},\]
The proof of \cref{P:approx:solution} follows by following a similar approach to Resnick's proof for the $L^2$-solutions of the SQG. The presence of a non-zero order singularity in the constitutive laws makes the analysis more delicate and requires usage of commutator estimates. We refer the reader to \cite{ChaeConstantinCordobaGancedoWu2012, MiaoXue2011JDE, LazarXue2019} for the proof of \cref{P:approx:solution}. Note that the family of smooth solutions, denoted $\tht^\eps$, clearly satisfies the uniform bound
\begin{align*}
    \Sob{\tht^{\eps}(t)}{\Hdot^{\frac{\be}{2}-1}(\RR^2)}^2+2\nu\int_0^t \int_{\RR^2} |\Lam^{\al+\frac{\be}{2}-1}\tht^{\eps}(s)|^2\,dx\,ds\le \Sob{\tht_0}{\Hdot^{\frac{\be}{2}-1}(\RR^2)}^2.
\end{align*}
Since $L^\infty ([0,\infty);\Hdot^{\frac{\be}{2}-1}(\RR^2))$ is the dual of the separable Banach space $L^1 ([0,\infty);\Hdot^{1-\frac{\be}{2}}(\RR^2))$, we can extract from $\{\tht^\eps\}_{\eps>0}$ a subsequence $\{\tht^{\eps_k}\}_{k\ge 0}$ which is $*$-weakly convergent to some function $\tht$ in $L^\infty ([0,\infty);\Hdot^{\frac{\be}{2}-1}(\RR^2))$ (as $k\to \infty$ and $\eps_k \to 0$) and also in $\mathcal{D}'([0,\infty)\times \RR^2)$. Note that the weak convergence is not enough to conclude the convergence of the nonlinear term since we only obtain $\bdy_t \tht+\lim_{k\to \infty}(\nabla \cdot (u^{\eps_k}\tht^{\eps_k}))=0$ in $\mathcal{D}'([0,\infty)\times \RR^2)$.\par
Let  $\varphi \in \mathcal{D}([0,\infty)\times \RR^2)$ be a test function and let $R, T \in [0,\infty)$ be such that $\supp \varphi \subset (0,T)\times \mathcal{B}(R)$. We will show that as $k \to \infty$, we have
\begin{align}\label{nonlinear:convergence}
    \int_0^T \int_{\RR^2} (\tht^{\eps_k}u^{\eps_k})\cdot \nabla \varphi \,dx\,dt\rightarrow \int_0^T \int_{\RR^2} (\tht u)\cdot \nabla \varphi \,dx\,dt.
\end{align}
Let $\psi^\eps$ be the stream function associated to $\tht^\eps$. Then, we have
\begin{align}\label{def:streamfunction}
    u^\eps=\nabla^\perp \psi^\eps,\quad and \quad \tht^\eps=\Lam^{2-\beta}\psi^\eps.
\end{align}
Let $j\ge 1$ be a fixed integer. Broadly, our approach involves splitting the nonlinear terms in the frequency space at the support of the $j^{\text{th}}$ dyadic block and obtaining estimates for the resulting terms. We then take the limit in $k$ first and then $j$ later to obtain the required convergence. Since $\tht^\eps$ is uniformly bounded in $\Hdot^{\frac{\be}{2}-1}(\RR^2)$, we find that $H_j \psi^\eps$ is uniformly bounded in $H^{1-\frac{\be}{2}}(\RR^2)$ with 
\begin{align}\label{psi:bound}
\Sob{H_j \psi^\eps}{H^{1-\frac{\be}{2}}(\RR^2)}\le C(j,\be)\Sob{\tht^\eps}{\Hdot^{\frac{\be}{2}-1}(\RR^2)}\le C(j,\be)\Sob{\tht_0}{\Hdot^{\frac{\be}{2}-1}(\RR^2)}
\end{align}
for all $\eps>0$ where $C(j,\be)$ is a positive constant. We will now obtain an estimate for $\bdy_t \tht^\eps$. 
Let $\phi \in \mathcal{D}(\RR^2)$. Upon integrating by parts and applying the Cauchy-Schwarz inequality, we have
\begin{align}\label{dt:theta:bound}
    \left|\int_{\RR^2}\bdy_t \tht^\eps \phi \,dx\right|&\le \left|\int_{\RR^2}( u^\eps\cdot \nabla \tht^\eps) \phi\,dx\right|+\eps\left|\int_{\RR^2}\Delta \tht^\eps \phi\,dx\right|+\nu\left|\int_{\RR^2}(-\Delta)^{\al}\tht^\eps \phi\,dx\right|\notag\\&\le \left|\int_{\RR^2}(\tht^\eps u^\eps)\cdot \nabla \phi\,dx\right|+\eps\left|\int_{\RR^2}\Lam^{\frac{\be}{2}-1}\tht^\eps \Lam^{3-\frac{\be}{2}} \phi\,dx\right|+\nu\left|\int_{\RR^2}\Lam^{\frac{\be}{2}-1}\tht^\eps \Lam^{2\al+1-\frac{\be}{2}}\phi\,dx\right|\notag\\
    &\le \left|\int_{\RR^2}(\tht^\eps u^\eps)\cdot \nabla \phi\,dx\right|+ \Sob{\tht_0}{\Hdot^{\frac{\be}{2}-1}(\RR^2)}\Sob{\phi}{H^{3-\frac{\be}{2}}(\RR^2)}
\end{align}
Using \eqref{def:streamfunction}, decomposing the trilinear term, and integrating by parts, we have
\begin{align*}
    &\int_{\RR^2}(\tht^\eps u^\eps)\cdot \nabla \phi\,dx\\
    &=\int_{\RR^2} \tht^\eps \nabla^\perp \Lam^{\be-2}S_j \tht^\eps \cdot \nabla \phi\,dx+\int_{\RR^2}S_j \tht^\eps \nabla^\perp \Lam^{\be-2}H_j \tht^\eps\cdot \nabla \phi\,dx+\int_{\RR^2}H_j \tht^\eps \nabla^\perp \Lam^{\be-2}H_j \tht^\eps\cdot \nabla \phi\,dx\\
    &=\int_{\RR^2} \tht^\eps \nabla^\perp \Lam^{\be-2}S_j \tht^\eps \cdot \nabla \phi\,dx-\int_{\RR^2}H_j \psi^\eps \nabla^\perp S_j \tht^\eps\cdot \nabla \phi\,dx+\int_{\RR^2}H_j \tht^\eps \nabla^\perp \Lam^{\be-2}H_j \tht^\eps\cdot \nabla \phi\,dx\\
    &=J_1(\tht^\eps)(t)+J_2(\tht^\eps)(t)+J_3(\tht^\eps)(t), 
\end{align*}
We now estimate $J_1(\tht^\eps)$, $J_2(\tht^\eps)$, and $J_3(\tht^\eps)$ individually. Applying \cref{L:Sobolev:product} with $s=t=1-\be/4$, we have
\begin{align}\label{dt:theta:bound:J1}
    |J_1(\tht^\eps)|&\le \Sob{\tht^\eps}{\Hdot^{\frac{\be}{2}-1}(\RR^2)}\Sob{\mathcal{R}^\perp \Lam^{\be-1}S_j \tht^\eps \cdot \nabla \phi}{\Hdot^{1-\frac{\be}{2}}(\RR^2)}\notag\\
    &\le \Sob{\tht^\eps}{\Hdot^{\frac{\be}{2}-1}(\RR^2)}\Sob{\mathcal{R}^\perp S_j \tht^\eps}{\Hdot^{\frac{3\be}{4}}(\RR^2)}\Sob{\nabla \phi}{\Hdot^{1-\frac{\be}{4}}(\RR^2)}\notag\\
    &\le C(j,\be)\Sob{\tht_0}{\Hdot^{\frac{\be}{2}-1}(\RR^2)}^2\Sob{\phi}{H^2(\RR^2)}.
\end{align}
We estimate $J_2(\tht^\eps)$ by using \eqref{psi:bound} as follows
\begin{align}\label{dt:theta:bound:J2}
    |J_2(\tht^\eps)|&\le \Sob{H_j\psi^\eps}{L^2(\RR^2)}\Sob{\nabla^\perp S_j \tht^\eps}{L^2(\RR^2)}\Sob{\nabla \phi}{L^\infty(\RR^2)}\notag\\
    &\le C(j,\be)\Sob{\tht_0}{\Hdot^{\frac{\be}{2}-1}(\RR^2)}^2\Sob{\phi}{H^3(\RR^2)}.
\end{align}
For $J_3(\tht^\eps)$, we use \eqref{highfreq:bound} with $f=\tht^\eps$ to obtain
\begin{align}\label{dt:theta:bound:J3}
    |J_3(\tht^\eps)|&\le C\Sob{H_j \tht^\eps}{\Hdot^{\frac{\be}{2}-1}(\RR^2)}^2 \Sob{\phi}{H^{3+\delta}(\RR^2)}\notag\\
    &\le C\Sob{\tht_0}{\Hdot^{\frac{\be}{2}-1}(\RR^2)}^2\Sob{\phi}{H^4(\RR^2)}.
\end{align}
From \eqref{dt:theta:bound}-\eqref{dt:theta:bound:J3}, we conclude
\begin{align*}
    \bdy_t \tht^\eps \in L^\infty([0,T];H^{-4}(\RR^2)),\\
\end{align*}
uniformly in $\eps$. Using this, we further deduce that
\begin{align}\label{dt:psi:bound}
    \bdy_t H_j\psi^\eps \in L^\infty([0,T];H^{-4}(\RR^2))
\end{align}
uniformly in $\eps$. Indeed, we have
\begin{align*}
    \left|\int_{\RR^2}\bdy_t H_j \psi^\eps \phi\,dx\right|&=\left|\int_{\RR^2}\bdy_t \tht^\eps \Lam^{\be-2}H_j \phi\,dx\right|\\
    &\le C\Sob{\bdy_t \tht^\eps}{H^{-4}(\RR^2)}\Sob{\Lam^{\be-2}H_j \phi}{H^4(\RR^2)}\\
    &\le C(j,\be)\Sob{\phi}{H^4(\RR^2)}.
\end{align*}
From \eqref{psi:bound} and \eqref{dt:psi:bound}, weak convergence of $\tht^{\eps_k}$, and by an application of the Aubin-Lions-Simon lemma, we extract a subsequence (still denoted by $H_j \psi^\eps_k$) such that
\begin{align}\label{H_jpsi:L2:convergence}
    H_j \psi^{\eps_k}\rightarrow H_j \psi \quad \text{in} \quad L^\infty([0,T];L^2_{\text{loc}}(\RR^2)),
\end{align}
where $H_j\psi\in L^\infty ((0,T);H^{1-\frac{\be}{2}}(\RR^2))$
We will now establish \eqref{nonlinear:convergence}. Let 
$\rho\in \mathcal{D}(\RR^2)$ be such that $\rho\equiv 1$ on $\mathcal{B}(R)$ with $\supp \rho \subset \mathcal{B}(2R)$. We define $\gam=\rho(x/10)$.
We decompose $\int_0^T \int_{\RR^2} (\tht^{\eps_k}u^{\eps_k})\cdot \nabla \varphi \,dx\,dt$ into four terms as follows
\begin{align*}
   \int_0^T \int_{\RR^2}  (\tht^{\eps_k}u^{\eps_k})\cdot \nabla \varphi \,dx\,dt=& \int_0^T \int_{\RR^2}  \tht^{\eps_k}\rho(\Lam^{\be-2}\nabla^{\perp}((1-\gam)S_j\tht^{\eps_k}))\cdot \nabla \varphi \,dx\,dt\\&+\int_0^T \int_{\RR^2}  \tht^{\eps_k}\gam(\Lam^{\be-2}\nabla^{\perp}(\gam S_j\tht^{\eps_k}))\cdot \nabla \varphi \,dx\,dt \\&+\int_0^T \int_{\RR^2}  S_j \tht^{\eps_k}(\Lam^{\be-2}\nabla^{\perp}H_j\tht^{\eps_k})\cdot \nabla \varphi \,dx\,dt\\
    &+\int_0^T \int_{\RR^2}  H_j \tht^{\eps_k}(\Lam^{\be-2}\nabla^{\perp}H_j\tht^{\eps_k})\cdot \nabla \varphi \,dx\,dt\\=&J_1^a (\tht^{\eps_k})+J_1^b(\tht^{\eps_k})+J_2(\tht^{\eps_k})+J_3(\tht^{\eps_k}).
\end{align*}
We first prove that $J_1^a(\tht^{\eps_{k}})\rightarrow J_1^a(\tht)$. 
We have
\begin{align*}
    J_1^a (\tht^{\eps_{k}})- J_1^a (\tht)=&\int \int \tht^{\eps_k}\rho(\Lam^{\be-2}\nabla^{\perp}((1-\gam)(S_j\tht^{\eps_k}-S_j\tht)\cdot \nabla \varphi \,dx\,dt\\
    &+\int \int (\tht^{\eps_k}-\tht)\rho(\Lam^{\be-2}\nabla^{\perp}((1-\gam)(S_j\tht^{\eps_k}))\cdot \nabla \varphi \,dx\,dt\\
    =&\til{J}_{11}^a(\tht^{\eps_k},\tht)+\til{J}_{12}^a(\tht^{\eps_k},\tht).
\end{align*}
Let us denote by
\[\zeta^{\eps_k}_{j}\equiv\rho(\Lam^{\be-2}\nabla^{\perp}((1-\gam)S_j\tht^{\eps_k})),\]
then we claim that
\begin{align*}
    \zeta^{\eps_k}_{j} \rightarrow \zeta_{j}\equiv \rho(\Lam^{\be-2}\nabla^{\perp}((1-\gam)S_j\tht))\quad \text{in}\quad L^\infty([0,T];H^{1-\frac{\be}{4}}(\mathcal{B}(R))).
\end{align*}
First, we observe that for all $x\in \mathcal{B}(2R)$,
\begin{align*}
    |\zeta^{\eps_k}_{j}(x)|&\le C\int_{|y|\ge 10R}\frac{1}{|x-y|^{1+\be}}|S_j \tht^{\eps_k}(y)|\,dy\\
    &\le C\int_{|y|\ge 10R}\frac{1}{|y|^{1+\be}}||S_j \tht^{\eps_k}(y)|\,dy\\
    &\le C\frac{1}{R^\be}\Sob{S_j \tht}{L^2(\RR^2)}\le C(j,\be)\Sob{\tht_0}{\Hdot^{\frac{\be}{2}-1}(\RR^2)}.
\end{align*}
Similarly, we obtain for all $x\in \mathcal{B}(2R)$.  
\begin{align*}
    |\nabla \zeta^{\eps_k}_{j}(x)|\le C(j,\be)\Sob{\tht_0}{\Hdot^{\frac{\be}{2}-1}(\RR^2)}.  
\end{align*}
Thus, we have
\begin{align*}
    \Sob{ \zeta^{\eps_k}_{j}}{L^\infty([0,T];H^1(\RR^2))}\le C(j,\be)\Sob{\tht_0}{\Hdot^{\frac{\be}{2}-1}(\RR^2)}.
\end{align*}
We now obtain a bound on $\bdy_t \zeta^{\eps_k}_{j}$. Let $\varphi\in \mathcal{D}(\RR^2)$. We have
\begin{align*}
    \left | \int \bdy_t \zeta^{\eps_k}_{j} \phi\,dx\right|&=\left|\int \rho(\Lam^{\be-2}\nabla^{\perp}((1-\gam)S_j\bdy_t\tht^{\eps_k})) \phi \,dx\right|\\
    &=\left| \int \bdy_t\tht^{\eps_k} S_j((1-\gam)\nabla^\perp \Lam^{\be-2}(\rho \phi))\,dx\right|\\
    &\le \Sob{\bdy_t \tht^{\eps_k}}{L^\infty([0,T];H^{-4}(\RR^2))}\Sob{S_j((1-\gam)\nabla^\perp \Lam^{\be-2}(\rho \phi))}{H^4(\RR^2)}\\
   & \le C(j,\be)\Sob{\phi}{L^2(\RR^2)},
\end{align*}
where in the last line, we used
\begin{align*}
    \Sob{S_j((1-\gam)\nabla^\perp \Lam^{\be-2}(\rho \phi))}{H^4(\RR^2)}&\le C(j)\Sob{\int_{\mathcal{B}(2R)}\frac{1}{|x-y|^{1+\be}}|\rho(y) \phi(y)|\,dy}{L^2(\mathcal{B}(10R)^c)}\\
    &\le C(j)\Sob{|x|^{-1-\be}}{L^2(\mathcal{B}(10R)^c)}\int_{\mathcal{B}(2R)}|\rho \phi|\,dy\\
    &\le C(j,\be)\Sob{\phi}{L^2(\RR^2)}.
\end{align*}
By an application of the Aubin-Lions-Simon lemma, we extract a further subsequence (still denoted by $\zeta^{\eps_k}_{j}$) such that
\begin{align}\label{zeta_j:s:convergence}
\zeta^{\eps_k}_{j}\rightarrow \zeta_{j}\quad \text{in}\quad L^\infty([0,T];H^{1-\frac{\be}{4}}(\mathcal{B}(R)).
\end{align}
Applying \eqref{L:Sobolev:product} with $s=t=1-\frac{\be}{4}$ and using \eqref{zeta_j:s:convergence}, we obtain
\begin{align}\label{lim:J11a}
    &\lim_{k\to \infty}|\til{J}_{11}^a(\tht^{\eps_k},\tht)|\notag\\&\le \lim _{k\to \infty}C\Sob{\tht^{\eps_k}}{L^{2}([0,T];\Hdot^{\frac{\be}{2}-1}(\RR^2))}\Sob{\zeta_{j}^{\eps_k}-\zeta_{j}}{L^2([0,T];H^{1-\frac{\be}{4}}(\mathcal{B}(R)))}\Sob{\nabla \varphi}{L^\infty([0,T];H^{1-\frac{\be}{4}}(\RR^2))}\notag\\
    &\le CT\Sob{\tht_0}{\Hdot^{\frac{\be}{2}-1}(\RR^2)}\lim_{k \to \infty}\Sob{\zeta_{j}^{\eps_k}-\zeta_{j}}{L^\infty([0,T];H^{1-\frac{\be}{4}}(\mathcal{B}(R)))}=0.
\end{align}
The weak convergence of $\tht^{\eps_k}$ in $L^\infty([0,T];\Hdot^{\frac{\be}{2}-1}(\RR^2))$ along with \eqref{zeta_j:s:convergence} directly gives us 
\begin{align}\label{lim:J12a}
    \lim_{k\to \infty}|\til{J}_{12}^a(\tht^{\eps_k},\tht)|=0.
\end{align}
Now we prove that $J_1^b(\tht^{\eps_{k}})\rightarrow J_1^b(\tht)$. 
We have
\begin{align*}
    J_1^b (\tht^{\eps_{k}})- J_1^b (\tht)=&\int \int \tht^{\eps_k}\gam(\Lam^{\be-2}\nabla^{\perp}(\gam(S_j\tht^{\eps_k})-S_j\tht)\cdot \nabla \varphi \,dx\,dt\\
    &+\int \int (\tht^{\eps_k}-\tht)\gam(\Lam^{\be-2}\nabla^{\perp}(\gam(S_j\tht^{\eps_k}))\cdot \nabla \varphi \,dx\,dt\\
    =&\til{J}_{11}^b(\tht^{\eps_k},\tht)+\til{J}_{12}^b(\tht^{\eps_k},\tht).
\end{align*}
Let us denote by
\[\bar{\zeta}^{\eps_k}_{j}\equiv\gam(\Lam^{\be-2}\nabla^{\perp}(\gam S_j\tht^{\eps_k})),\]
then we claim that
\begin{align*}
    \bar{\zeta}^{\eps_k}_{j} \rightarrow \bar{\zeta}_j\equiv \gam(\Lam^{\be-2}\nabla^{\perp}(\gam S_j\tht))\quad \text{in}\quad L^\infty([0,T];H^{1-\frac{\be}{4}}(\mathcal{B}(R))).
\end{align*}
First, we observe that
\begin{align*}
    \Sob{\bar{\zeta}^{\eps_k}_{j}}{L^\infty ([0,T];H^{1}(\RR^2))}&\le C\Sob{\Lam^{\be-1}S_0(\gam S_j \tht^{\eps_k})}{L^\infty ([0,T];L^{2}(\RR^2))}+C\Sob{\Lam^{\be}H_0(\gam S_j \tht^{\eps_k})}{L^\infty ([0,T];L^{2}(\RR^2))}\\
    &\le \begin{cases}
        C\Sob{\gam S_j\tht^{\eps_k}}{L^\infty([0,T];L^2(\RR^2))}+C\Sob{\gam S_j \tht^{\eps_k}}{L^\infty ([0,T];H^{2}(\RR^2))},\, &\be\in [1,2)\\
        C\Sob{\gam S_j\tht^{\eps_k}}{L^\infty([0,T];L^{\frac{2}{2-\be}}(\RR^2))}+C\Sob{\gam S_j \tht^{\eps_k}}{L^\infty ([0,T];H^{2}(\RR^2))},\, &\be\in (0,1)
    \end{cases}\\
    &\le C\Sob{\gam}{H^2(\RR^2)}\Sob{ S_j\tht^{\eps_k}}{L^\infty([0,T];H^2(\RR^2))}\\
    &\le C(j,\be)\Sob{\tht^{\eps_k}}{L^{\infty}([0,T];\Hdot^{\frac{\be}{2}-1}(\RR^2))} \le C(j,\be)\Sob{\tht_0}{\Hdot^{\frac{\be}{2}-1}(\RR^2)}.
\end{align*}
We now obtain a bound on $\bdy_t \bar{\zeta}^{\eps_k}_{j}$. Let $\phi\in \mathcal{D}(\RR^2)$. We have
\begin{align*}
    \left | \int \bdy_t \bar{\zeta}^{\eps_k}_{j} \phi\,dx\right|&=\left|\int \gam(\Lam^{\be-2}\nabla^{\perp}(\gam S_j\bdy_t\tht^{\eps_k})) \phi \,dx\right|\\
    &=\left| \int \bdy_t\tht^{\eps_k} S_j(\gam \nabla^\perp \Lam^{\be-2}(\gam \phi))\,dx\right|\\
    &\le \Sob{\bdy_t \tht^{\eps_k}}{L^\infty([0,T];H^{-4}(\RR^2))}\Sob{S_j(\gam \nabla^\perp \Lam^{\be-2}(\gam \phi))}{H^4(\RR^2)}\\
   & \le C(j)\Sob{\gam \nabla^\perp \Lam^{\be-2}(\gam \phi) }{L^2(\RR^2)}\\
   &\le \begin{cases}
   C(j)\Sob{\Lam^{\be-1}(\gam \phi)}{L^{2}(\RR^2)},\,\quad &\be \in [1,2)\\
       C(j)\Sob{\Lam^{\be-1}(\gam \phi)}{L^{\frac{2}{\be}}(\RR^2)},\,\quad &\be \in (0,1)
   \end{cases}  \\
   &\le C(j,\be)\Sob{\phi }{H^1(\RR^2)}.
\end{align*}
By an application of the Aubin-Lions-Simon lemma, we can extract a further subsequence such that
\begin{align}\label{zeta_j:bar:s:convergence}
\bar{\zeta}^{\eps_k}_{j}\rightarrow \bar{\zeta}_{j}\quad \text{in}\quad L^\infty([0,T];H^{1-\frac{\be}{4}}(\mathcal{B}(R)).
\end{align}
Applying \eqref{L:Sobolev:product} with $s=t=1-\frac{\be}{4}$ and using \eqref{zeta_j:bar:s:convergence}, we obtain
\begin{align}\label{lim:J11b}
    &\lim_{k\to \infty}|\til{J}_{11}^b(\tht^{\eps_k},\tht)|\notag\\&\le \lim _{k\to \infty}C\Sob{\tht^{\eps_k}}{L^{2}([0,T];\Hdot^{\frac{\be}{2}-1}(\RR^2))}\Sob{\bar{\zeta}_j^{\eps_k}-\bar{\zeta}_j}{L^2([0,T];H^{1-\frac{\be}{4}}(\mathcal{B}(R)))}\Sob{\nabla \varphi}{L^\infty([0,T];H^{1-\frac{\be}{4}}(\RR^2))}\notag\\
    &\le CT\Sob{\tht_0}{\Hdot^{\frac{\be}{2}-1}(\RR^2)}\lim_{k \to \infty}\Sob{\bar{\zeta}_j^{\eps_k}-\bar{\zeta}_j}{L^\infty([0,T];H^{1-\frac{\be}{4}}(\mathcal{B}(R)))}=0.
\end{align}
The weak convergence of $\tht^{\eps_k}$ in $L^\infty([0,T];\Hdot^{\frac{\be}{2}-1}(\RR^2))$ along with \eqref{zeta_j:bar:s:convergence} gives us 
\begin{align}\label{lim:J12b}
    \lim_{k\to \infty}|\til{J}_{12}^b(\tht^{\eps_k},\tht)|=0.
\end{align}
Now we prove that $J_2(\tht^{\eps_{k}})\rightarrow J_2(\tht)$. 
We have
\begin{align*}
    J_2 (\tht^{\eps_{k}})- J_2 (\tht)=&\int \int H_j\psi^{\eps_k}(\nabla^{\perp}S_j\tht^{\eps_k}-\nabla^{\perp}S_j\tht)\cdot \nabla \varphi \,dx\,dt\\
    &+\int \int (H_j\psi^{\eps_k}-H_j \psi)\nabla^{\perp}S_j\tht\cdot \nabla \varphi \,dx\,dt\\
    =&\til{J}_{21}(\tht^{\eps_k},\tht)+\til{J}_{22}(\tht^{\eps_k},\tht).
\end{align*}
Let us denote by
\[\varrho^{\eps_k}_{j}\equiv\nabla^{\perp}S_j\tht^{\eps_k},\]
then, we claim that
\begin{align*}
    \varrho^{\eps_k}_{j} \rightarrow \varrho_j\equiv \nabla^{\perp}S_j\tht\quad \text{in}\quad L^\infty([0,T];L^2_{\text{loc}}(\RR^2)).
\end{align*}
We have
\begin{align*}
    \Sob{\varrho^{\eps_k}_{j}}{L^\infty ([0,T];H^1(\RR^2))}\le C(j,\be)\Sob{S_j \tht^{\eps_k}}{L^\infty([0,T];\Hdot^{\frac{\be}{2}-1}(\RR^2))}\le C(j,\be)\Sob{\tht_0}{\Hdot^{\frac{\be}{2}-1}(\RR^2)}.
\end{align*}
We now obtain a bound on $\bdy_t \varrho^{\eps_k}_{j}$. Let $\phi\in \mathcal{D}(\RR^2)$. We have
    \begin{align*}
    \left | \int \bdy_t \varrho^{\eps_k}_{j} \phi\,dx\right|&=\left|\int (\nabla^{\perp}S_j \bdy_t\tht^{\eps_k}) \phi \,dx\right|\\
    &=\left| \int \bdy_t\tht^{\eps_k} S_j(\nabla^\perp \phi)\,dx\right|\\
    &\le \Sob{\bdy_t \tht^{\eps_k}}{L^\infty([0,T];H^{-4}(\RR^2))}\Sob{S_j\nabla^\perp \phi}{H^4(\RR^2)}\\
   & \le C(j)\Sob{\phi}{H^1(\RR^2)},
\end{align*}
By an application of the Aubin-Lions-Simon lemma, we can extract a further subsequence such that
\begin{align}\label{rho_j:s:convergence}
\varrho^{\eps_k}_{j}\rightarrow \varrho_{j}\quad \text{in}\quad L^\infty([0,T];L^2_{\text{loc}}(\RR^2)).
\end{align}
Applying Holder's inequality, and using \eqref{rho_j:s:convergence}, we obtain
\begin{align}\label{lim:J21}
    \lim_{k\to \infty}|\til{J}_{21}(\tht^{\eps_k},\tht)|&\le \lim _{k\to \infty}C\Sob{H_j\psi^{\eps_k}}{L^{2}([0,T];L^2(\RR^2))}\Sob{\varrho_j^{\eps_k}-\varrho_j}{L^2([0,T];L^2(\mathcal{B}(R)))}\Sob{\nabla \varphi}{L^\infty([0,T];L^\infty(\RR^2))}\notag\\
    &\le CT\Sob{H_j \psi_0}{L^2(\RR^2)}\lim_{k \to \infty}\Sob{\varrho_j^{\eps_k}-\varrho_j}{L^\infty([0,T];L^2(\mathcal{B}(R)))}=0.
\end{align}
By \eqref{H_jpsi:L2:convergence} and \eqref{rho_j:s:convergence}, we obtain
\begin{align}\label{lim:J22}
    \lim_{k\to \infty}|\til{J}_{22}(\tht^{\eps_k},\tht)|=0.
\end{align}
We now obtain an estimates for $J_3(\tht^{\eps_k})$ and $J_3(\tht)$. By applying \eqref{highfreq:bound} with $f=\tht^{\eps_k}$, we obtain
\begin{align*}
    \left|\int_{\RR^2}  H_j \tht^{\eps_k}(\Lam^{\be-2}\nabla^{\perp}H_j\tht^{\eps_k})\cdot \nabla \varphi \,dx\,dt\right|\le C \Sob{\varphi}{H^{3+\delta}(\RR^2)}\Sob{H_j \tht^{\eps_k}}{\Hdot^{\frac{\be}{2}-1}(\RR^2)}^2.
\end{align*}We have
\begin{align}\label{est:J3}
    |J_3(\tht^{\eps_k})|&\le C\int_0^\infty \Sob{\varphi}{H^{3+\delta}(\RR^2)}\Sob{H_j \tht^{\eps_k}}{\Hdot^{\frac{\be}{2}-1}(\RR^2)}^2\,dt\notag\\
    &\le C(\varphi)2^{-2\al j}\int_0^\infty \Sob{H_j \tht^{\eps_k}}{\Hdot^{\al+\frac{\be}{2}-1}(\RR^2)}^2\,dt
    \notag\\&\le C(\varphi)2^{-2\al j}\Sob{\tht^{\eps_k}}{L^2([0,\infty);\Hdot^{\al+\frac{\be}{2}-1}(\RR^2))}\notag\\
    &\le C(\varphi)2^{-2\al j}\Sob{\tht_0}{\Hdot^{\frac{\be}{2}-1}(\RR^2)}^2.
\end{align}
Similarly, we have
\begin{align}\label{est:J3tht}
    |J_3(\tht)|\le C(\varphi)2^{-2\al j}\Sob{\tht_0}{\Hdot^{\frac{\be}{2}-1}(\RR^2)}^2.
\end{align}
From \eqref{lim:J11a}, \eqref{lim:J12a}, \eqref{lim:J11b}, \eqref{lim:J12b}, and \eqref{lim:J21}-\eqref{est:J3tht}, we obtain
\begin{align*}
    \limsup_{k\to \infty}\left|\int_0^T \int_{\RR^2} (\tht^{\eps_k}u^{\eps_k})\cdot \nabla \varphi \,dx\,dt- \int_0^T \int_{\RR^2} (\tht u)\cdot \nabla \varphi \,dx\,dt\right|\le C(\varphi)2^{-2\al j}\Sob{\tht_0}{\Hdot^{\frac{\be}{2}-1}(\RR^2)}^2.
\end{align*}
When $j$ goes to $+\infty$, we obtain the desired result.

\bibliographystyle{plain}
\bibliography{main_bib.bib}
\vspace{.3in}
\end{document}